\let\oldtocsection=\tocsection
\let\oldtocsubsection=\tocsubsection
\let\oldtocsubsubsection=\tocsubsubsection
\renewcommand{\tocsection}[2]{\hspace{0em}\oldtocsection{#1}{#2}}
\renewcommand{\tocsubsection}[2]{\hspace{1em}\oldtocsubsection{#1}{#2}}
\renewcommand{\tocsubsubsection}[2]{\hspace{2em}\oldtocsubsubsection{#1}{#2}}
\newtheorem{theorem}{Theorem}[section]
\newtheorem{lemma}[theorem]{Lemma}
\newtheorem{definition}[theorem]{Definition}
\newtheorem{corollary}[theorem]{Corollary}
\newtheorem{rk}[theorem]{Remark}
    \theoremstyle{definition}
    \newtheorem{remark}[theorem]{Remark}
    \newtheorem{example}[theorem]{Example}
    \def\C{\mathbb{C}}
    \def\a{\alpha}
    \def\R{\mathbb{R}}
    \def\tilde{\widetilde}
    \def\o{\circ}
    \def\o{\circ}
    \def\0{\emptyset}
    \def\R{\mathbb{R}}
\newcommand{\CC}{{\bf C}}
\newcommand{\RR}{{\bf R}}
\newcommand{\liek}{{\bf k}}
    \theoremstyle{definition}
    \def\C{\mathbb{C}}
    \def\a{\alpha}
    \def\R{\mathbb{R}}
    \def\tilde{\widetilde}
    \def\o{\circ}
    \def\o{\circ}
    \def\0{\emptyset}
    \def\R{\mathbb{R}}
\newcommand{\liet}{{\bf t}}
\title{IMPLODED CROSS-SECTIONS}
\author{LISA JEFFREY AND SINA ZABANFAHM}
\begin{document}
\pdfoutput=1
    \begin{abstract} In this survey article,
we describe imploded cross-sections, which were developed in order
to solve the problem that the cross-section of a Hamiltonian 
$K$-space is usually not symplectic. 
In some specific examples we contrast the intersection 
homology of some imploded cross-sections with their 
homology intersection spaces.
      Moreover, we compute the homology of intersection spaces associated to the open cone of  a simply connected, smooth, oriented manifold and 
the suspension of such a manifold.
    \end{abstract}
\maketitle
\tableofcontents
\section{Introduction}

Let $(M, \omega)$ be a Hamiltonian $K$-manifold, where $K$ is 
a compact Lie group with maximal torus $T$. In other words,
we assume $M$ is equipped with a symplectic structure and with 
the action of a group $K$ which preserves the symplectic 
structure and is generated by the 
Hamiltonian flow of a collection of functions (the 
moment maps for the group action). Denote the 
Lie algebras of $K$ and   $T$  by $\liek$ and $\liet$ respectively.
Let the moment map be denoted $\Phi: M \to \liek^*$.

The preimage $\Phi^{-1}(\liet^*)$ is 
in general not a manifold, and even where it is a manifold
it is usually not symplectic.
The purpose of the imploded cross-section construction \cite{G-J-S}
 was  to construct 
a Hamiltonian $T$ space $M_{\rm impl}$ whose symplectic quotients 
(with respect to the $T$ action) correspond to the symplectic quotients of $M$ 
with respect to its $K$ action.

In this survey article,  we outline the construction of imploded cross-sections,
focusing on the universal imploded cross-section, which is the imploded 
cross section of the cotangent bundle of $K$.
The universal imploded cross-section could be seen as an analogue of the symplectic realization of $\liek^*$. 
 The universal imploded cross-section plays as important a role in the context of symplectic
manifolds as $\liek^*$ plays among Poisson manifolds.

   The imploded cross-section  can be studied
using intersection cohomology. We can also study it using the homology intersection 
space construction  \cite{Banagl-Hunsicker}. We examine
 differences between 
these constructions, focusing on the universal implosion.

The layout of this article is as follow.
In Section  \ref{s:implcr} we give general definitions and motivation 
for this construction.
In Section \ref{s-one} we describe the uses of the 
imploded cross-section.
In Section \ref{s-two} we characterize the  universal
imploded cross-section.
In Section \ref{s-four} we describe the work of Dancer, Kirwan and Swann (see for 
example \cite{DKS:comp})
on hyperk\"ahler implosions.  We discuss the work of Safronov \cite{Safronov} relating imploded
cross sections with derived geometry.
We also describe   the work of Martens-Thaddeus \cite{MT} on the universal nonabelian symplectic cut. Next  we review the work of 
Howard, Manon and Millson \cite{HMM} about bending flows, and show 
how this work can be described in terms of imploded cross-sections.
 We outline
the work of Hilgert-Manon-Martens \cite{HMM} 
on 
symplectic contraction, and related work of  Jeremy Lane \cite{Lane-SC}.
Finally we review recent work of Hoffman and Lane on 
toric degenerations and integrable systems \cite{HL} which was
described in \cite{htalk} and \cite{ltalk}.

In Section \ref{symplectic-Implosion} we review
intersection homology and describe the intersection homology of a universal imploded
cross-section of $SU(3)$ (joint work of the first author with Nan-Kuo Ho \cite{HJ}).
In subsection \ref{conifold-transition-blow-up} we contrast intersection homology (IH) with  the homology intersection spaces (HI) of Banagl and
 Hunsicker \cite{Banagl-Hunsicker}.
In subsection \ref{ss:ihperv} we outline intersection homology. 
In subsection \ref{hisp} we outline the construction of homology
intersection spaces.
In subsection \ref{s3univ} we outline the construction of the 
universal imploded cross-section of $SU(3)$, while in 
subsection \ref{ihcone} we describe the intersection homology of a cone.
In subsection  \ref{theorem 1.1} we give a computation of the HI of the universal
imploded cross-section studied above. 
Finally, in the Appendix  \ref{s:suspension} we compute the  HI of suspensions,
by methods similar to those from subsection \ref{theorem 1.1}.

The authors thank Markus Banagl for guidance on Section 
\ref{symplectic-Implosion} of the paper,
and thank Reyer Sjamaar for suggestions regarding Section  \ref{s-four}.
They would also like to thank Ben Hoffman and Jeremy Lane  for providing the detailed description of their work that appears in Section \ref{hl} below.

\section{Imploded cross-sections} \label{s:implcr}
In this section and the next, we follow \cite{G-J-S}.

We fix $K$ to be a compact 
connected Lie group. Let  $(M,\omega)$ be a Hamiltonian $K$-manifold with equivariant moment map $\Upphi:M\rightarrow \liek$. Here $\liek$ denotes the Lie algebra of 
the group $K$. Moreover, we assume that $T$ is a maximal torus of $K$ and $\liet_+^*$ 
 is is a chosen  fundamental Weyl chamber in $\liet^*$.
 Here $\liet$ denotes the 
 Lie    algebra of the maximal torus $T$ of $K$.

Symplectic manifolds with Hamiltonian $K$ actions are parametrizations
of systems equipped with a symmetry group $K$. It is desirable to 
divide out by the symmetry group to obtain a simpler system.
However, in general the new system will not be symplectic. 

If $0$ is a regular value of the moment map, and 
$K$ acts freely on $\Phi^{-1}(0)$, then the  symplectic quotient 
$\Phi^{-1}(0)/K$   is a smooth
manifold. If $0$ is a regular value of the moment map,
then $K$ acts with finite stabilizers at 
all points of $\Phi^{-1}(0)$ and $\Phi^{-1}(0)/K$ is
an orbifold, in other words a topological space that is 
locally homeomorphic to the quotient space of an open subset of a smooth
manifold by a finite group action.

If we instead want a space that parametrizes not the symplectic
quotient at $0$, but rather  symplectic quotients at other orbits in $\liek^*$,
then it makes sense to look at $\Phi^{-1} (\mathcal{O}_\lambda)/K$
for a general orbit $\mathcal{O}_\lambda$ of the 
coadjoint action in $\liek$.
If we take the preimage of one such 
orbit of the coadjoint action under the moment map and then take the quotient by the 
action of $K$, in general we recover an orbifold, or in good
situations 
a smooth manifold.
For example, for $K = SO(3)$, the Lie algebra $\liek^*$ is
identified with $\RR^3$ and the coadjoint action of $K$   on it is
the rotation action. In this case, the orbits $\mathcal{O}_\lambda$ are
2-spheres with center $0$ 
through the points $ 0 \ne \lambda \in \liet^*$. 

\subsection{Symplectic cross-section theorem}

In general there is an open subset 
$U$ of  $\liet^*$ for which $\Phi^{-1}(U)/T$ is foliated by 
the symplectic quotients $\Phi^{-1}(\lambda)/T$ for 
$\lambda \in U$. Moreover,
for each $\lambda \in \liet^*$, we have
$\Phi^{-1}(\lambda)/T \cong \Phi^{-1} (\mathcal{O_\lambda})/K. $
The symplectic cross-section theorem of Guillemin and 
Sternberg (\cite{GS}, Section 41) states the following.

Let $M$ be a Hamiltonian $K$-manifold, where $K$ is a compact connected 
Lie group. Let $T$ be a maximal torus of $K$ with Lie algebra $\liet$.
Denote the moment map for the $K$ action on $M$ by $\Phi: M \to \liek^*$. 

Let $\alpha$ be a point in $\liet^*$. 
Let $p$ be a point in $M$  with $\alpha = \Phi(p)$. 
 Let $B_\epsilon (\alpha) $ be a ball of
radius $\epsilon$  in $ \liet^* $ around   $\alpha$.
By \cite{GS} Theorem 26.7, 
there is a $T$-invariant neighbourhood $U$ of $\Phi^{-1}(\alpha)$  in $M$
such that 
$$W= \Phi^{-1} \Bigl (B_\epsilon(\alpha)\Bigr ) \cap U $$
is a symplectic submanifold of $M$, where $B_\epsilon(\alpha)$ is 
a ball in $\liek^*$ with center $\alpha$ and radius $\epsilon$.
Then $W$ is $T$-invariant and 
the action of $T$ on $W$ is Hamiltonian and the moment map is 
just the restriction of $\Phi: W \to \liet^*$.
The space $W$ is called a slice for the $G$ action at $p$. 

Every $K$-orbit in $M$ 
that intersects $W$ intersects it in a single $T$-orbit
(\cite{GS}, Proposition 41.2) 

Finally $W$ can be reconstructed
from $\alpha$ and the isotropy representation of $T$ on 
$TW_p$, as a Hamiltonian $T$ space
(\cite{GS}, Theorem 41.2).

\subsection{Examples}

\begin{example}

\begin{enumerate}

\item
The space $ \CC P^1$, equivalently $S^2$,  is a  coadjoint orbit of 
the rotation group $SO(3).$
The moment map is the inclusion map into $\RR^3$. 
We may choose a maximal torus of $SO(3)$ so that the dual 
of the Lie algebra of 
the maximal torus is identified with the vertical axis $\{ (0,0,s)|s \in 
\RR\} $ in the dual of Lie algebra of $SO(3)$, which is identified with $\RR^3$. 
The preimage of the Lie algebra of $\liet^*$ under
$\Phi$ consists of two points, the north
and south poles. 
\item 
The group $U(n+1)$ acts on the space $\CC P^n $ by right multiplication.
The moment map for this action is

$$\phi([z_0: \dots: z_n])_{ij}  =\frac{\sqrt{-1} z_i \bar{z_j} }{2 \pi 
\sum_{k=0}^n |z_k|^2}. $$
The preimage of $\liet^*$ is the 
subspace where 
the image of the  moment map is a diagonal matrix 
with entries in $\sqrt{-1} \RR$.

This means $z_i \bar{z_j}  = 0 $ if $i \ne j$, and
hence that  at least one of $z_i $ or $z_j$ is $0$.
This is only possible if all but one of $z_i$ are $0$. 
These points of $\CC P^n$ are the fixed points of the action of the 
maximal torus by right multiplication. In 
this case the preimage of $\liet^*$ under
the moment map consists of isolated points.

\item We now consider the product of a collection of spheres.
Let $K = SU(2)$ act diagonally on a space $M$ which is the   product of 
$N$ copies of the 
sphere $\CC P^1$. 
The moment map is the 
sum 
$$ \Phi: (u_1, \dots, u_N) \mapsto \sum_{i=1}^N u_j. $$
Here, each  $u_j$ is regarded as a point of $\liek^*  \cong \RR^3$
(so the sum $\sum_j u_j$  makes sense).
Each $u_j$ satisfies $|u_j| = 1$ for each $j$ since each $u_j$ is a member of $S^2$.

In this case the condition that the moment map takes values in 
$\liet^*$ is that 
the sum $\sum_j u_j$ is in $\liet^*$, in other words that this sum is on 
a chosen axis in $\RR^3$, for example the vertical axis.

\end{enumerate}

Notice that the space of products of spheres 
has a subspace  of $\Phi^{-1} (\liet^*)$ for which the restriction 
of the symplectic form from the product of spheres is
not everywhere symplectic. This restriction 
is of course closed, but it may be degenerate.
Fix a point $x \in \Phi^{-1} (\liet^*) \subset M$.

Taking a basis for the
tangent space to $\Phi^{-1}(\liek^*)$ at
$x$,  denote by $\iota: \Phi^{-1}(\liet^*) \to 
\Phi^{-1} (\liek^*)$
the inclusion map, and its adjoint $\pi := \iota^*$ is the 
projection from the tangent space  to $\Phi^{-1} (\liek^*)$ at $x$ to 
the tangent space to $\Phi^{-1}(\liet^*)$ at $x$
 using the chosen invariant inner product.
Denote by $A_x$ the matrix that represents the 
symplectic form in this  basis. Then the condition that
the restriction of the symplectic form is degenerate at $x$
is that the matrix  $\pi \circ A_x  \circ \iota$ is degenerate,
in other words that the determinant of $\pi \circ A_x \circ \iota$ 
is $0$.
This means that one particular minor of this matrix is $0$. 
This minor is the determinant of a square submatrix of  real codimension $\dim (K) - \dim (T)$ $:= \ell$.
In appropriate coordinates on the 
tangent space to $M$ at $x$,  it is  the determinant of the first $2N -\ell$ rows and the first $2N-\ell$ columns of the matrix.

\end{example}

\begin{example} [Actions on coadjoint orbits] \cite{Lane-NF}

Jeremy Lane has studied the action of $SU(n-1) $ on a coadjoint orbit of $SU(n)$ where the action arisis from the inclusion of $SU(n-1)$ in $SU(n)$ and
the coadjoint action  of $SU(n)$ on the orbit. He finds that there are 
loci where the symplectic form on the orbit becomes degenerate when
restricting to  the dual of the Lie algebra of the maximal torus 
of $SU(n-1)$.
For example, Lane 
 finds that for $n=3$,  the symplectic cross-section of such an orbit is 
a Lagrangian submanifold  of  dimension $3$.
 \end{example}

When taking a symplectic quotient at a value of the moment map which is not a regular
value, the symplectic quotient is a stratified
symplectic space \cite{Sjamaar-Lerman}. 
In other words, the symplectic quotient is not a smooth 
manifold, but decomposes into strata each of which is a smooth
manifold and has a symplectic structure. The preimage of the Lie algebra
of the maximal torus under the moment map (called the symplectic 
cross-section) is also a stratified space, but the 
strata are not necessarily symplectic. 
The imploded cross-section is designed to
repair this so that each stratum of the preimage of the maximal torus 
under the moment map 
has a symplectic structure.

Define a relation $\sim$ on $\Upphi^{-1}({\liet}_+^*)$ as follows:

\begin{definition} \label{eqrel}
Let $K$ act on $\liek^*$ by the coadjoint action.
Then 
$m_1\sim m_2$ if there exists 
$k\in[K_{\Upphi(m_1)},K_{\Upphi(m_2)}]$ such that $k\cdot m_1=m_2$,
where $k \cdot m$ denotes the image of the action of $k$ on $m$.
\end{definition}
  It turns out that this defines an equivalence relation on $\Upphi^{-1}({\liet^*})$. Indeed, by equivariance of the moment map $\Upphi$, $m_1\sim m_2$ implies that $K_{\Upphi(m_1)}=K_{\Upphi(m_2)}$ and therefore this 
equivalence relation is transitive.
\begin{definition}
The symplectic implosion $M$, denoted by $M_{impl}$, is defined as 
$$ M_{impl}=M/\sim, $$
where $\sim$ is the above equivalence relation.
This space is equipped with the quotient space topology.
\end{definition}
One can lift  the left action of $K$ on itself
 to a Hamiltonian action on the cotangent bundle $T^*K$. The implosion of 
the cotangent bundle, $(T^*K)_{impl}$, is called the {\em universal 
imploded cross-section} of $K$. The following theorem explains why this space is called ``universal":
\begin{theorem} \label{univsymimpl} (\cite{G-J-S}, Theorem 4.9)
For any Hamiltonian $K$-manifold $M$, there exists an isomorphism
\[M_{impl}\cong (M\times (T^*K)_{impl})\sslash_0 K,\]
where $\sslash_0$ denotes the symplectic quotient and 
 the symplectic quotient is with respect to the diagonal action of $K$.
\end{theorem}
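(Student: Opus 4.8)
The plan is to establish the isomorphism by a version of the shifting trick adapted to implosion, exhibiting an explicit map and then checking it is an isomorphism of stratified symplectic spaces. First I would fix the left trivialization $T^*K\cong K\times\liek^*$, writing points as $(k,\mu)$. Under it $T^*K$ carries two commuting Hamiltonian $K$-actions; with the sign conventions of \cite{G-J-S} I may take their moment maps to be $(k,\mu)\mapsto\mu$ and $(k,\mu)\mapsto-\mathrm{Ad}^*_k\mu$. The universal imploded cross-section is the implosion with respect to the first action, so its points are equivalence classes $[k,\mu]$ with $\mu\in\liet_+^*$, where $(k,\mu)\sim(k',\mu)$ precisely when $k^{-1}k'\in[K_\mu,K_\mu]$. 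The second action descends to a residual Hamiltonian $K$-action $a\cdot[k,\mu]=[ak,\mu]$ with moment map $\Phi_L([k,\mu])=-\mathrm{Ad}^*_k\mu$, and there is also a residual $T$-action $[k,\mu]\cdot t=[kt,\mu]$; the residual $K$-action is the one entering the diagonal action on $M\times(T^*K)_{impl}$.

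Next I would describe the zero level set $Z=(\Phi_M+\Phi_L)^{-1}(0)$ of the diagonal moment map. A pair $(m,[k,\mu])$ lies in $Z$ exactly when $\Phi_M(m)=\mathrm{Ad}^*_k\mu$; since every coadjoint orbit meets the closed chamber $\liet_+^*$ in a unique point, for each $m$ such $\mu$ and $k$ exist, so $Z$ is nonempty and surjects onto $M$. The key step is to bring each point of $Z$ to canonical form: acting by the diagonal element $a=k^{-1}$ sends $(m,[k,\mu])$ to $(k^{-1}\cdot m,[e,\mu])$, and equivariance gives $\Phi_M(k^{-1}\cdot m)=\mathrm{Ad}^*_{k^{-1}}\Phi_M(m)=\mu\in\liet_+^*$. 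Thus every diagonal $K$-orbit in $Z$ has a representative $(m',[e,\mu])$ with $m'\in\Phi_M^{-1}(\liet_+^*)$ and $\mu=\Phi_M(m')$, and two such representatives $(m',[e,\mu])$, $(m'',[e,\mu])$ lie in the same orbit iff $m''=a\cdot m'$ for some $a$ with $[a,\mu]=[e,\mu]$, i.e. $a\in[K_\mu,K_\mu]=[K_{\Phi_M(m')},K_{\Phi_M(m'')}]$. This is exactly the relation of Definition \ref{eqrel}, so
\[
\Phi_M^{-1}(\liet_+^*)/\!\sim \;\longrightarrow\; (M\times(T^*K)_{impl})\sslash_0 K,\qquad [m']\mapsto\bigl[(m',[e,\Phi_M(m')])\bigr]
\]
is a well-defined bijection with inverse $[(m,[k,\mu])]\mapsto[k^{-1}\cdot m]$, and one checks it intertwines the residual $T$-actions.

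I would then verify that this bijection is a homeomorphism. Continuity in both directions follows from continuity of the $K$-action and of the quotient maps, together with the observation that the passage to canonical form $(m,[k,\mu])\mapsto(k^{-1}\cdot m,[e,\mu])$ can be performed continuously and locally by choosing slices for the $K$-action near $Z$; well-definedness of the inverse uses that replacing $(k,\mu)$ by $(kh,\mu)$ with $h\in[K_\mu,K_\mu]$ changes $k^{-1}\cdot m$ by an element of $[K_\mu,K_\mu]$ and hence does not change its class in $M_{impl}$.

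I expect the main obstacle to be upgrading this homeomorphism to an isomorphism of stratified symplectic spaces in the sense of \cite{Sjamaar-Lerman}: one must match the two stratifications and compare the reduced symplectic forms stratum by stratum. The difficulty is that $Z$ fails to be a manifold where $\mu$ lies on a wall of $\liet_+^*$, so the smooth reduction theorem does not apply directly; instead I would argue stratum by stratum, identifying the symplectic slice at each orbit type and using reduction in stages to show that the form induced on the stratum of $(M\times(T^*K)_{impl})\sslash_0 K$ through $[(m',[e,\mu])]$ coincides with the form that the implosion construction places on the corresponding stratum of $M_{impl}$. Controlling the interaction between the strata of $(T^*K)_{impl}$, indexed by the faces of the chamber (equivalently by the subgroups $[K_\mu,K_\mu]$), and the orbit-type strata of $M$ under the diagonal reduction is the technical heart of the argument.
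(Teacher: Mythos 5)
The paper does not actually prove this statement: it is quoted directly from \cite{G-J-S} (Theorem 4.9), so there is no internal proof to compare your attempt against. Your argument is, in substance, the standard proof from that reference: you implode $T^*K\cong K\times\liek^*$ with respect to one of the two commuting lifted actions, use the surviving action to put every point of the zero level set of the diagonal moment map into the canonical form $(m',[e,\mu])$ with $\mu=\Phi_M(m')\in\liet^*_+$, and observe that the residual identifications are exactly those of Definition \ref{eqrel}; the resulting bijection $\Phi_M^{-1}(\liet^*_+)/\!\sim\;\to (M\times(T^*K)_{impl})\sslash_0 K$ and its continuity in both directions (by descending the continuous map $(m,k,\mu)\mapsto[k^{-1}\cdot m]$ through the quotient topologies) are correct as you present them. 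You also correctly identify the remaining content --- matching the stratifications and the reduced symplectic forms stratum by stratum, where the level set fails to be a manifold along the walls of $\liet^*_+$ --- which is precisely the part that \cite{G-J-S} handle using the symplectic cross-section theorem and the local normal form; as a blind reconstruction of the cited proof, your proposal is sound.
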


The above theorem tells us that the imploded cross-section of $M$ is the 
same as the symplectic quotient at $0$  of the product of $M$ with the imploded cross-section of the cotangent bundle of $K$ (in other 
words the universal imploded cross-section of $K$).


\section{Universal  imploded cross-section} \label{s-one}
\subsection{Introduction}
Let $K$ be a compact connected Lie group.
The symplectic quotient of 
 a symplectic manifold equipped with a Hamiltonian $K$ action is a stratified symplectic space 
(in other words each stratum is equipped with a symplectic structure 
\cite{Sjamaar-Lerman}), but the preimage of the dual of the Lie algebra of the
 maximal torus $T$ under the moment map is not
necessarily symplectic. The imploded cross-section of such a manifold 
has the property  that the intersection of each stratum with the preimage of the 
Lie algebra of the dual of the maximal torus under the moment map is symplectic.  

\subsection{Universal implosion} \label{s-two}
The universal imploded cross-section is the imploded 
cross-section of the cotangent bundle of a Lie group. It has
the property that the symplectic quotient at $0$  of the product of a 
Hamiltonian $K$-manifold $M$ with the universal imploded cross-section of $T^* K$
 is
the imploded cross-section of $M$. The  definition of the 
universal imploded cross-section was
stated  in \cite{G-J-S}. 
\subsubsection{Universal imploded cross-section of $SU(2)$}

The universal imploded cross-section of $K= SU(2)$ is 
a copy of $\CC^2$, as shown in Example 4.7 of \cite{G-J-S}.
One way to describe this correspondence is that 
the cotangent bundle of $K$ may be identified with $K \times \liek^*$,
with the moment map given by projection on $\liek^*$.
Hence the preimage of $\liet^*$ is $K \times \liet^*$.
The implosion is obtained by collapsing the fiber above the identity
element $e$  by the action of the commutator subgroup $[K,K]$. 
But $[K,K]  = K$ so this action collapses $K \times \{0\} $ to 
 a point. 
Hence the imploded cross-section is $K \times \RR/ \sim$. Here
the equivalence relation identifies 
$(x,0) \sim (y,0)$. 
If $s \ne 0 $ and $t \ne 0 $, then 
$(x,t) \sim (y,s) $ implies
$t = s$ and $x= y$.
This identifies the imploded cross-section as the cone on $S^3$, in other words $\CC^2$.

\subsubsection{Universal imploded cross-section of $SU(3)$} \label{s3univ1}

As described in Example 6.16 in \cite{G-J-S}, the universal imploded cross-section of $SU(3)$ has a structure of an irreducible affine complex variety which is given by
\[\{(z,w)\in \C^3\times \C^3 \mid  z\cdot w=0\}.\]
This space has an isolated singularity at $(0,0)$. It turns out that this space is homeomorphic to the open cone over the compact connected Riemannian manifold
\[Y=\{(z,w)\in \C^3\times \C^3\mid z \cdot w=0, \vert z\vert ^2+\vert w\vert ^2=1\}.\]

We show in \cite{HJ} (Theorem 4.2) that 
the space $Y$ decomposes as the union of two 
spaces $W$ and $X$, where 
 the disjoint union of two copies of $S^5$
is a deformation retraction of $W$, while 
$SU(3)$ is a deformation retraction of $X$ and 
the disjoint union of two copies of $SU(3)$ is a deformation retraction 
of 
$W  \cap X$.
 A Mayer-Vietoris sequence enables us
to compute the homology of $Y$, which completes
the computation of the intersection homology of $X$.

\subsubsection{Quasi-Hamiltonian analogue for $SU(2)$}

Quasi-Hamiltonian $K$-spaces were introduced
by Alekseev, Malkin and Meinrenken \cite{AMM}.  A
quasi-Hamiltonian $K$-space is a manifold equipped with 
a $K$-action and equipped with a 2-form $\omega$.
The form $\omega$ is neither closed nor nondegenerate,
but satisfies three key properties with respect to the $K$ action
(\cite{AMM}, Definition 2.2) which
are analogous to  certain properties of a 
Hamiltonian $K$-manifold  
(namely that it is equipped with a 2-form which is closed and nondegenerate,
and there is an equivariant  moment map $\mu: M \to \liek^*$). 

The space $K \times K$ is the quasi-Hamiltonian analogue of the cotangent
bundle $T^* K$, and has a group valued moment map (the commutator map
$\Phi: K \times K \to K$).
It is then possible to define an imploded cross-section for quasi-Hamiltonian
$K$-spaces in a manner analogous to the definition of imploded 
cross-sections for Hamiltonian $K$-spaces.

We now specialize to $K=SU(2)$ for the remainder of this section.
Under this definition,
the imploded cross-section of the quasi-Hamiltonian 
space  $K \times K$ (the double for the group $K$) is
isomorphic to $S^4$
(Proposition 2.29 of \cite{HJ1}). 

This is true because the strata of $K $ 
are $\sigma^0$ (the
stratum consisting of all elements of $K$ whose stabilizer is conjugate to $T$
under the adjoint action)
and $\sigma^\pm = \pm I $ (the 
stratum consisting of the elements of the center of $K$). We have
$$\Phi^{-1} (\sigma^0) \cong K \times (0,\pi) .$$
Of course $\Phi^{-1} (\sigma^\pm) $ is a copy of $K$ 
and $\Phi^{-1}(\sigma^\pm)/K$ is a point. 
These fit together to form the suspension of $K = SU(2) = S^3$, which 
is $S^4$. So the  imploded cross-section of $K \times K$ is 
$S^4$.

\section{Further work on symplectic implosion} \label{s-four}

In this section we describe some other examples of work on 
symplectic implosion. 

\subsection{Hyperk\"ahler implosion: the work of Dancer, Kirwan and Swann}
  Dancer, Kirwan, Swann and their collaborators defined hyperk\"ahler
analogues of symplectic implosion. In 
\cite{DKS:comp} these authors define a hyperk\"ahler reduction of the universal example
for $SU(n)$  in terms of quiver varieties.
 For example, the hyperk\"ahler reduction of the universal
example  is also a quiver variety. For the 
general definition of a quiver variety, see for example 
King \cite{King} or Ginzburg \cite{Ginzburg}.

In \cite{DKS:arbeit} these authors extend this treatment to the orthogonal and
symplectic groups. They
 discuss some of the ways in which  these cases
are different  from 
 $SU(n)$.
In \cite{DKS:ram} 
the authors show that the universal hyperk\"ahler
 imploded cross-section contains a hypertoric variety
(in other words a submanifold with a hyperk\"ahler structure
which is preserved by a torus action).
The last section of this paper outlines an alternative description involving gauge
theory and Nahm's equations. 

In \cite{DK:mult}, the hyperk\"ahler quotient of a space with a group-valued moment map is defined.
In \cite{DKS:nahm}, these authors relate hyperk\"ahler
implosion  to Nahm's equations. In \cite{DKS:twist} the authors study the  twistor
space (see for example \cite{MW}) associated to the hyperk\"ahler implosion of a Hamiltonian $K$-space, where
$K=SU(n)$. 


\subsection{Derived geometry and implosion: the work of Safronov}

Let $G$ be a reductive Lie group.
In \cite{Safronov}, Safronov shows that the universal implosion is equivalent to the 
hyperk\"ahler implosion of a stacky quotient.
The idea of shifted symplectic geometry was developed for a stack
by Pantev {\em et al.} \cite{Pantev}. Safronov interprets symplectic implosion in this context. Symplectic
implosion replaces a Hamiltonian $G$-space by a Hamiltonian 
$H$-space (where $H$ is the maximal torus of $G$) so that the symplectic quotients
at all level sets of the moment map are the same.   Group-valued 
implosions are defined.

Safronov gives a characterization of the universal symplectic implosion in terms of stacks
(using a compact Lie group $K$ and its complexification $G$, with 
Borel subgroup $B$ with Lie algebra $b$ and nilpotent subgroup $N$ with 
Lie algebra $n$, so that $n = [b,b]$).
Safronov obtains that the universal symplectic implosion of $G$ is  the stack 
$G \times_N b$. 

Safronov also provides an adjoint map  for implosion, which maps $H$-spaces to $G$-spaces.
This could be thought of as a one-sided inverse map.

Safronov shows in Theorem 3.11 \cite{Safronov} that quasi-Hamiltonian reductions of imploded cross-sections
with respect to a group $K$ are the same as covers of quasi-Hamiltonian reductions with respect to the maximal torus $H$ 
along a coadjoint orbit.

\subsection{Symplectic cuts: the work of Martens and Thaddeus}
The work of  Martens and Thaddeus \cite{MT} defines a ``universal
nonabelian symplectic cut", the 
``nonabelian symplectic cut"  of the cotangent bundle of a compact Lie group $K$.
The nonabelian symplectic cut of a Hamiltonian $K$-manifold $M$ is then 
defined as the symplectic quotient of the product of $M$ and 
the universal nonabelian symplectic cut. The universal nonabelian 
symplectic cut has a symplectic description as the 
symplectic cut of $T^*K$ according to a  polytope $P$  (see equation (12)
of \cite{MT} for the 
definition). There is also an algebraic-geometric characterization of the universal
nonabelian symplectic cut
(see equation (14) of \cite{MT}).

\subsection{The work of Howard-Manon-Millson} \label{ss:bendingflows}

In \cite{HoMM},
 these authors consider the space of polygonal linkages 
in $\RR^3$, in other words
$m$-sided polygons in $\RR^3$ with fixed
side lengths.
In this paper, the authors restrict to the group $K=SU(2)$
whose Lie algebra is $\RR^3$.
 This space can be identified with the 
symplectic quotient of 
the Grassmannian of two-planes in $\CC^m$ by 
the action of $T^m$, where $T$ is the maximal torus of $K$.

  This space is equipped with a torus action
(``bending flows''). The torus action  fixes
one part  of a polygon  (the part on one side of a 
diagonal) and rotates the rest around
that  diagonal.  The difficulty is that
there is a set of measure zero where  the 
torus action  fails to be defined, the
set where some  diagonal 
has length $0$. 




The space of $m$-gon linkages was originally 
described in \cite{KM} and was given a symplectic structure. 
In the paper \cite{HoMM}, the authors
identify the space of  polygonal linkages  in $\RR^3$ with the 
symplectic quotient of a  Grassmannian by a torus
action (as described above). 

They show in Section 3 of their paper that 
this topological space may be given an alternative
description as an imploded cross-section.
One may use the homeomorphism between the Grassmannian of 
2-planes in $\CC^m$ and the symplectic quotient
of the imploded cross-section of cotangent bundle of $K^m$ by the
left diagonal action of $K$.

\subsection{Symplectic contractions: the work of Hilgert-Manon-Martens and Lane}

Hilgert, Manon and Martens  \cite{HMM} define  a symplectic contraction from one
 Hamiltonian space to another, which is a continuous
surjective map to a new Hamiltonian space whose restriction to an open dense 
subset is   a symplectomorphism. They give an explicit formula for the symplectic 
contraction map. They are able to show that the symplectic
contraction map $\Phi$ maps  a dense subset 
of $M$ symplectomorphically onto a dense subset of the image of $\Phi$.    They interpret the 
Gelfand-Zeitlin system on a coadjoint orbit in this language. Their construction uses symplectic reduction and symplectic
implosion.

Jeremy Lane \cite{Lane-SC} gives an 
alternative definition of  a symplectic contraction $M^{sc}$ of a symplectic 
manifold $M$. He shows that 
his definition is equivalent to the definition 
given by Hilgert, Manon and Martens.
He exhibits   the symplectic contraction map $\Phi: M \to M^{sc}$ 
as a surjective Poisson map, a property that is not immediately
obvious from the definition given in  \cite{HMM}.
Lane  also shows that  the symplectic contraction  is equipped 
with  a 
Poisson algebra of smooth functions. 
He identifies the symplectic contraction with the 
quotient space obtained by subdividing $M$ into suitable coisotropic
submanifolds and quotienting each
coisotropic submanifold by the null foliation of the 
restriction of the symplectic form  to it.

Lane proves that the symplectic contraction map sends $M$ to $M^{sc}$ in a way that is a homeomorphism which preserves strata.  This enables him
to define a smooth structure on the symplectic contraction (the 
quotient space), so he is able to define a smooth structure
on a singular space. 

The Poisson structure on the symplectic 
contraction pulls back from the Poisson structure on the original 
manifold. 
Lane uses symplectic contractions to study Gelfand-Zeitlin systems:
these are symplectic manifolds
equipped with a multiplicity-free Hamiltonian 
$U(n)$ action.  Gelfand-Zeitlin systems have 
many similarities with toric manifolds 
but are not always equipped with the Hamiltonian 
 action of a torus of half the dimension of the manifold.

\subsection{Canonical bases and collective integrable systems: the work of  Hoffman and Lane} \cite{HL,htalk,ltalk} \label{hl}

Let $K$ be a compact connected Lie group and let $(M,\omega,\mu)$ be a Hamiltonian $K$-manifold. A classical 
 problem in symplectic geometry asks: Beginning with the action of $K$,
 can one construct a Hamiltonian action of the compact torus $\mathbb{T} = (S^1)^m\times T$ on $M$  (with as small a kernel as possible)?
Here $m$ is half the dimension of a regular coadjoint orbit of $K$ and $T$ is the maximal torus of $K$. 

This question was first answered by Guillemin and Sternberg 
\cite{GSGC} in the case when $K$ is a unitary group $U(n)$ or an orthogonal group $O(n)$. Their solution involved the construction of the now-famous Gelfand-Zeitlin integrable system on 
$\liek^*$; they show that there is 
a Hamiltonian $\mathbb{T}$ action on $M$ whose moment map is the composition of $ M\xrightarrow{\mu}  \liek^*$ with the moment map $\liek^* \xrightarrow{\Psi} 
\operatorname{Lie}(\mathbb{T})^*$ for the Gelfand-Zeitlin system.
A second approach, involving toric degenerations, was used by Harada-Kaveh \cite{HK} in the case that $M$ is a smooth projective variety and $\omega$ is the Fubini-Study form.

 The following result of Hoffman and Lane shows how to solve this problem in the general case.

\begin{theorem}\cite{HL}
    Let $K$ be a compact connected Lie group. There exists a continuous map 
$\Psi \colon \liek^*\to \operatorname{Lie}(\mathbb{T})^*$ so that, for any Hamiltonian $K$-manifold $(M,\omega,\mu),$ there is a commuting diagram
    \begin{equation}\label{eqn; thm 1 diagram}
	    \begin{tikzcd}
           M \ar[d,"\mu"]\ar[r,"\phi"] & X \ar[d] \\
           \liek^*\ar[r,"\Psi"] & \operatorname{Lie}(\mathbb{T})^*.
        \end{tikzcd}
    \end{equation}
    Here,
    \begin{itemize}
        \item $X$ is a singular Hamiltonian $\mathbb{T}$-space and the vertical arrow on the right is its moment map.
        \item $\phi$  is a continuous, proper, $T$-equivariant, surjective map that is a symplectomorphism from a dense subset of $M$ onto its image.
    \end{itemize}
    Moreover:
    \begin{itemize}
        \item The map $\Psi\circ \mu$ generates a Hamiltonian $\mathbb{T}$-action on a dense subset of $M$. 
        \item If $M$ is multiplicity-free, then the action of $\mathbb{T}$ on a dense subset of $M$ is completely integrable.
        \item If $M$ is compact and connected, then its image in $\operatorname{Lie}(\mathbb{T})^*$ is a convex polytope.
    \end{itemize}
\end{theorem}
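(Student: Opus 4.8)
\section*{Proof proposal}

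The plan is to assemble the pair $(\phi,\Psi)$ from two ingredients already described in this survey: the symplectic contraction of Hilgert--Manon--Martens and Lane, which supplies the maximal-torus part of the structure, and a universal collective integrable system on $\liek^*$, which supplies the extra $(S^1)^m$ directions. Concretely, I would take $X := M^{sc}$ to be Lane's symplectic contraction of $M$ and let $\phi\colon M\to M^{sc}$ be the contraction map. By Lane's results described above, $\phi$ is automatically continuous, proper, $T$-equivariant, and surjective, and it restricts to a symplectomorphism from a dense open subset of $M$ onto its image; moreover $M^{sc}$ is a singular Hamiltonian $T$-space. This already yields every asserted property of $\phi$, so the remaining task is to upgrade the $T$-action on $X$ to a Hamiltonian action of the larger torus $\mathbb{T}=(S^1)^m\times T$ and to produce the universal map $\Psi$.

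To manufacture the missing $(S^1)^m$ directions I would construct, once and for all, a collective integrable system on the regular locus of $\liek^*$. Over the interior of the chamber every coadjoint orbit $\mathcal{O}_\lambda$ is diffeomorphic to the regular orbit of dimension $2m$, and by Theorem \ref{univsymimpl} the reductions of these orbits are governed by the universal imploded cross-section $(T^*K)_{\mathrm{impl}}$. The universal imploded cross-section carries a rank-$m$ Hamiltonian torus action (a ``universal Gelfand--Zeitlin'' system, built via Thimm's trick and canonical bases as in the Hoffman--Lane framework), and I would declare the $(S^1)^m$-component of $\Psi$ to be the moment map of this system, taken fibrewise over $\liet_+^*$. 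The $T$-component of $\Psi$ is the sweep $\liek^*\to\liet_+^*\subset\liet^*$ sending each coadjoint orbit to its unique point in the closed chamber. Setting $\Psi$ equal to the product of these two components produces a single continuous map $\Psi\colon\liek^*\to\operatorname{Lie}(\mathbb{T})^*$ that does not depend on $M$. Since $\phi$ intertwines $\mu$ with the moment map of $M^{sc}$, and the collective system is obtained by pulling back along $\mu$, the square in \eqref{eqn; thm 1 diagram} commutes by construction.

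For the three ``moreover'' assertions I would argue as follows. Let $M^\circ\subseteq M$ be the dense open set on which $\mu$ meets the chamber interior; there both the sweep and the collective functions are smooth. Their components Poisson-commute: the $(S^1)^m$-components commute among themselves because they come from a collective system (Thimm), and they commute with the $T$-components because the latter are pulled back from the Poisson-commuting coordinates on $\liet^*$. Hence $\Psi\circ\mu$ generates a Hamiltonian $\mathbb{T}$-action on $M^\circ$. If $M$ is multiplicity-free, the generic $K$-reduced spaces are points, which is precisely the condition forcing the generic fibres of $\Psi\circ\mu$ on $M^\circ$ to be isotropic of maximal dimension, so the commuting functions form a completely integrable system. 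Finally, if $M$ is compact and connected, I would obtain convexity of $(\Psi\circ\mu)(M)$ by combining the Kirwan convexity of $\mu(M)\cap\liet_+^*$ with the fibrewise convexity of the Gelfand--Zeitlin image, transporting the conclusion to the singular target $X$ by a stratified convexity argument together with the properness of $\phi$.

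The main obstacle I expect is universality itself: one must exhibit a single $\Psi$ on $\liek^*$ that fits every Hamiltonian $K$-manifold simultaneously, and verify that the collective system on $(T^*K)_{\mathrm{impl}}$ descends compatibly to all of the coadjoint-orbit reductions. Closely tied to this is controlling the degeneracy locus highlighted in the examples above: the set where the restricted symplectic form on the cross-section becomes degenerate is exactly where $\phi$ fails to be a local symplectomorphism, and I would need to show it lies in a set of measure zero so that the symplectomorphism-on-a-dense-subset claim survives. Extending the convexity conclusion to the singular space $X$, where the smooth convexity theorems do not apply verbatim, is the secondary difficulty.
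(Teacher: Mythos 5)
There is a genuine gap, and it sits exactly where you place the weight of your argument. You propose to obtain the extra $(S^1)^m$ directions from ``a universal Gelfand--Zeitlin system, built via Thimm's trick and canonical bases,'' declared fibrewise over $\liet^*_+$ and then pulled back through $\mu$. But the existence of such a system for a \emph{general} compact connected $K$ is precisely the content of the theorem, not an available ingredient: as the survey itself points out, the Thimm/Guillemin--Sternberg construction of collective integrable systems only works for $K=U(n)$ or $O(n)$, where a suitable chain of subgroups exists, and the Harada--Kaveh toric-degeneration approach requires $M$ smooth projective with the Fubini--Study form. For, say, $K$ of exceptional type there is no Thimm-type chain producing $m$ independent Poisson-commuting collective functions, so your construction of the $(S^1)^m$-component of $\Psi$ has nothing to start from. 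The route the paper actually describes (following Hoffman--Lane) is different in kind: one identifies $(T^*K)_{\rm impl}$ with the affine variety $G\sslash N$, degenerates it to the affine toric varieties $X_C$ attached to the Berenstein--Zelevinsky/Littelmann cone $C$ (whose lattice points index the dual canonical basis of $\mathbb{C}[G\sslash N]$), and integrates the gradient-Hamiltonian flows of these degenerations to transport the toric torus action back to the implosion; the technical heart is controlling these flows on a variety that is neither smooth nor projective. The map $\Psi$ and its image $\Psi(\liek^*)=C$ come out of that degeneration, not out of a fibrewise collective system over the chamber interior.

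Two secondary points. First, your choice $X=M^{sc}$ with $\phi$ the Lane/Hilgert--Manon--Martens contraction only produces a Hamiltonian $T$-space; the universality mechanism in the paper is Theorem \ref{univsymimpl}, i.e.\ one builds the integrable system once on $(T^*K)_{\rm impl}$ and then obtains $X$ for arbitrary $M$ as a reduction of $M\times (T^*K)_{\rm impl}$, which is what makes $\Psi$ independent of $M$. Second, restricting to the locus where $\mu$ meets the chamber interior does not by itself give continuity and properness of $\Psi$ on all of $\liek^*$ (including the walls and singular orbits); that extension is part of what the gradient-Hamiltonian-flow estimates are for, and your proposal defers it to the ``obstacles'' paragraph rather than addressing it.
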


Although we do not go into details of their construction here, 
the proof of this result uses symplectic implosion in a fundamental way.  
The key ingredient is the identification of the universal
 symplectic implosion of $T^*K$ with the geometric invariant theory quotient $G\sslash N$, where 
$G$ is the complexification of $K$ and $N$ is a maximal unipotent subgroup 
of $G$
\cite{G-J-S}.  It is known that the affine variety $G\sslash N$ admits 
toric degenerations.  Hoffman and Lane show that it is 
possible to integrate gradient-Hamiltonian flows of these 
degenerations and thus obtain completely integrable Hamiltonian torus actions on the universal symplectic implosion of $T^*K$. In contrast with previous works~\cite{NNU, HK} on gradient-Hamiltonian flows and toric degenerations, the variety $G\sslash N$ 
is neither smooth nor projective. In order to prove their result, Hoffman and Lane develop new techniques for controlling gradient-Hamiltonian vector fields in this setting. 

The degenerations of $G\sslash N$ used in the construction are 
degenerations to specific affine toric varieties $X_C$, as in~\cite{C}. 
Each $X_C$ is associated with the semigroup of integral points in a convex rational polyhedral cone 
$C\subset\operatorname{Lie}(\mathbb{T})^*$ 
described by Berenstein-Zelevinsky and Littelmann \cite{BZ, L}. 
The lattice points of $C$ are in bijection with 
the elements of the Kashiwara-Lusztig 
dual canonical basis~\cite{K, Lu} of $\mathbb{C}[G\sslash N]$. Under the map $\Psi$ constructed by Hoffman and Lane, the image $\Psi(\liek^*)$ is equal to $C$.
The cone $C$ comes with a natural linear projection to the positive Weyl chamber $\liet^*_+$. This projection fits into a commuting diagram:
\begin{equation}
	    \begin{tikzcd}
           \liek^*\ar[r,"\Psi"]\ar[d,"/K"] & \operatorname{Lie}(\mathbb{T})^*\ar[d]\\
           \liet_+ \ar[r,"="] & \liet_+.
        \end{tikzcd}
\end{equation}
Combining this diagram with the one in the theorem 
above, one readily observes that when $M$ is compact and connected, 
the image $\Psi\circ\mu(M)$ is precisely the intersection of $C$ with the pre-image of the Kirwan polytope of $M$ under the projection $\operatorname{Lie}(\mathbb{T})^* \to \liet^*_+$.

\section{Topological invariants of the universal imploded cross-section}\label{symplectic-Implosion}


In this section we describe some  invariants
of imploded cross-sections. We specialize to the universal 
imploded cross-section, because general symplectic quotients of 
imploded cross-sections can be obtained as symplectic quotients
of the direct product of an arbitrary Hamiltonian $K$-manifold
and the universal imploded cross-section of $K$. 
Because the examples are easier to compute, we specialize
to the universal imploded cross-section of $K=SU(3)$. 
We compare intersection homology ($IH$) with homology 
intersection spaces ($HI$). 

This section is subdivided as follows. 
In subsection \ref{ss:conifold} we describe the conifold transition.
In subsection \ref{ss:ihperv} we survey intersection 
homology and perversities.
In subsection  \ref{hisp} we summarize homology intersection spaces.
In subsection \ref{s3univ} we compute the intersection 
homology of the universal example for $SU(3)$. 
In subsection \ref{ihcone} we summarize the intersection 
homology of a cone.

\subsection{Conifold transition and blow-up manifold}\label{conifold-transition-blow-up}  \label{ss:conifold}


In this section we define the conifold transition and the 
blow-up spaces, following Banagl \cite{Banagl}.

 These spaces  provide
 an approach to studying Poincar\'e duality on singular spaces. 
An object called a perversity will be defined below in Definition (\ref{d:perv}).
Given a perversity $\overline{p}$,
one  may associate a CW complex $I^{\overline{p}}X$ to a 
certain class of singular spaces $X$ . For our purposes, we only need to understand this construction in the case that $X$ 
is a Thom-Mather pseudomanifold of depth 1 with a trivial link bundle. The definition of Thom-Mather stratified spaces is given with more generality in \cite{Albin}. The following definition appeared in \cite{Banagl-Chriestenson}:
    \begin{definition}
    A depth one pseudomanifold $X$ with singularity $\Sigma$ is a pair 
$(X,\Sigma)$, where \begin{enumerate}
        \item $\Sigma$ is understood to be a closed subspace and a smooth manifold of codimension at least 2.
        \item $X\setminus \Sigma$ is a smooth manifold which is dense in $X$.
        \item $\Sigma$ possesses control data consisting of a tube $T\subset X$ around $\Sigma$ which is an open set in $X$ together with two maps:
        \begin{align*}
            &\pi: T\longrightarrow \Sigma\\
            &\rho:T\longrightarrow [0,\infty)
        \end{align*}
        such that $\pi$ is a continuous retraction and $\rho$ is a continuous distance function such that $\rho^{-1}(0)=\Sigma$. Moreover, it is required that $(\pi,\rho):T\setminus \Sigma\longrightarrow \Sigma \times (0,\infty)$ is a smooth submersion.
    \end{enumerate}
    \end{definition}
    \begin{definition} \label{deflcl}
Let $L$ be a simply connected, smooth manifold.
Let  $c^\o(L)$ be the open cone on $L$.
\end{definition}
\begin{remark}
    Notice that, when $L$ is a smooth manifold, 
the cone $c^\o(L)$ on $L$  is a depth 1 Thom-Mather pseudomanifold, with $v$ (the vertex of the cone) as its singularity.
    The link bundle of a depth 1 pseudomanifold is defined in Proposition $8.2$ in 
\cite{Banagl-Chriestenson}. In the case that $X=c^\o(L)$, the link bundle is as follows: 
    \[L\rightarrow v. \]

    \end{remark}
    Carefully following \cite{Banagl-Hunsicker}, we write down the construction of the conifold transition and the blow-up manifold associated to 
$(X,\Sigma)$.

 Take a tubular neighborhood $N$ around the singularity $\Sigma$ and fix a diffeomorphism
\[\theta:N \setminus \Sigma\cong L\times \Sigma \times (0,1).\]
Define the blow-up manifold to be:
\[\overline{M}=(X \setminus \Sigma)\cup_\theta (L\times \Sigma\times[0,1)).\]
Define the conifold transition to be:
\begin{align}\label{conifold-transition}
    CT(X)=\frac{(X \setminus \Sigma)\cup_\theta (L\times\Sigma \times [0,1))}{(z,y,0)\sim (z,y^\prime,0)}
\end{align}
for all $z\in L$, and for all $y,y^\prime\in \Sigma$.
\begin{remark}\label{cone-structure}
Following this construction, one can see that when the singularity $\Sigma$ is a point, $CT(X)=\overline{M}$ is a manifold with boundary $L$. In particular, when $X$ is $c^\o(L)$ for some smooth manifold $L$, we have \[CT(X)=\overline{M}\cong L\times [0,1). \]
\end{remark}

\subsection{Intersection homology and perversities} \label{ss:ihperv}

In this section we outline intersection homology, which reduces to ordinary homology
for smooth manifolds. See the books \cite{friedman} (Chapter 2)  and \cite{Kirwan-Woolf}  for general 
background information on intersection homology.

Taking the quotient of a smooth manifold by a group action 
produces singularities unless the group acts freely.
The resulting topological spaces fail to satisfy 
standard topological properties  such as Poincar\'e duality.
Intersection homology and intersection cohomology were introduced
by Goresky and MacPherson \cite{Goresky-MacPherson} in an effort to 
preserve some of the topological properties that
are familiar in the setting of smooth manifolds.  Intersection homology
satisfies the ``K\"ahler package", which includes Poincar\'e duality,
the hard Lefschetz theorem and  the Lefschetz hyperplane theorem. This is 
the most we can hope to retain when we leave the  the setting of  smooth manifolds.

Since imploded cross-sections are singular, it is reasonable
to investigate their topological properties using 
tools like intersection homology which are adapted to singular
spaces, rather than to try to study the ordinary homology of 
these spaces.

\begin{definition} \label{d:perv}
A perversity is a map 
$$p: \{ 2,3, \dots, \} \to \{ 0, 1, \dots\} $$
satisfying
\begin{equation} \label{cond1}
p(2) = 0 
\end{equation}
and
\begin{equation} \label{cond2}
 p(k) \le p(k+1) \le p(k) + 1 \end{equation}
for all $k \ge 2.$ 
\end{definition}

Throughout this article  $\overline{p},\overline{q}$ are considered to be extended perversities, 
which are just  sequences of integers (see \cite{Banagl-Hunsicker}, Section 3). 

\begin{definition} 
The intersection homology of $X$ is the homology of the chain complex of $p$-allowable
chains of $X$, where $p$ is a perversity.
Here a  $j$-simplex $\sigma$ is $p$-allowable if 
\begin{equation} {\rm dim} (\sigma \cap X_{n-k}) \le j-k + p(k) 
\end{equation}
for all $k \ge 2$.
\end{definition}

\subsection{Homology intersection spaces} \label{hisp}

\vspace{0.5truein}


 The definition of intersection spaces was given 
 in \cite{Banagl}.
In this subsection we will describe how to construct $I^{\overline{p}}X$, the perversity $\overline{p}$ intersection space associated to $X$ when $(X,\Sigma)$ is a depth 1 pseudomanifold 
where the link $L$ of the singular stratum $\Sigma$ is simply 
connected and the link bundle is 
the product bundle $L \times \Sigma \to \Sigma$ 
(as defined in the first section of  \cite{Banagl-Hunsicker}).
The definition of the intersection space is given below in equation 
(\ref{intspacedef}).

Let $l:=\dim L$ and set $k:=l-\overline{p}(l+1)$. \footnote{Note that by the dimension of a manifold we always mean the real dimension.}
\begin{definition} \label{llessdef}
Let  $L_{<k}$ be  the union of all strata of $L$ of degree less than $k$.
\end{definition}
Assume $f:L_{<k}\longrightarrow L$ is a stage $k$ Moore approximation of $L$ 
(see \cite{Banagl-Hunsicker}, Definition 3.1 for the definition).
 In other words,
the homology groups $H_i(L_{<k})$ are $0$ for $i\geq k$ and \[f_*:H_i(L_{<k})\longrightarrow H_i(L)\] is an isomorphism for $i<k$. Define the map $g:L_{<k}\times \Sigma\longrightarrow M$ to be the composition:
\begin{equation} \label{Moore-map}
    L_{<k}\times\Sigma\xrightarrow{f\times id_\Sigma}L\times \Sigma= \partial \overline{M}\hookrightarrow \overline{M}.
\end{equation}

\begin{definition} \label{intspacedef}
In the notation introduced above, 
the perversity $\overline{p}$ intersection space $I^{\overline{p}}X$ is defined to be:
\begin{align} 
    I^{\overline{p}} X ={\rm cone} (g)=\overline{M}\cup_g c(L_{<k}\times \Sigma).
\end{align}
\end{definition}

 The notation $I^{\overline{p}}X$ stands for the perversity $\overline{p}$ intersection space associated to $X$ (as introduced in \cite{Banagl}). See 
the definition given by equation  (\ref{intspacedef}) above. 

\begin{definition} (Homology intersection space) \cite{Banagl-Hunsicker}
The homology  $\tilde{H}I_i^{\overline{p}}(X)$ is defined by
\[\tilde{H}I_i^{\overline{p}}(X)=\tilde{H}_i(I^{\overline{p}}X;\R)\]
where by $\tilde{H}_*(X)$ we mean the reduced (singular) homology of $X$. 
\end{definition}

\begin{remark} \label{prevremark}
When $X$ is a stratified pseudomanifold of dimension $n$ with an isolated singularity, the following 
formulas are available for the homology of the 
intersection space $\tilde{H}I^p(X)$ (\cite{Banagl-Maxim}, p. 221):
\begin{align}\label{isolated-sing}
{H}I_*^p(X)=\begin{cases}
{H}_j(\overline M,\partial{\overline M}) & j<k\\
{H}_j(\overline M) & j>k
\end{cases}
\end{align}
where $k:=n-1-p(n)$ and $\overline{M}$ is the blow-up manifold associated to the space $X$ (as defined in subsection \ref{conifold-transition-blow-up}).
For the dimension $k$ homology, the following diagram with exact rows and columns exists:
\begin{equation*}
\begin{diagram}\
      &    &               &    &0		 &    &                               &    &   \\
      &    &               &    &\dTo		 &    &                               &    &   \\
    0 &\rTo& {\rm ker}\it{} \bigl(\tilde H_k(\overline M)\rightarrow \tilde H_k(\overline M,\partial \overline M)\bigr )&\rTo& \tilde{H}_k(\overline M) &\rTo&  I\tilde{H}_k(X) &\rTo& 0 \\
      &	   &	           &    &\dTo    	 &    & 			      &    &   \\
      &    &               &    & \tilde{H}I_k(X)&    &                               &    &   \\
      &    &               &    &\dTo		 &    &                               &    &   \\
      &    &               &    &{\rm im}\it{}\bigl(\tilde{H}_k(\overline M,\partial\overline M\bigr )\rightarrow H_{k-1}(\partial \overline M))&    &                               &    &   \\
      &    &               &    &\dTo		 &    &                               &    &   \\
      &    &               &    & 0		 &    &                               &    &
\end{diagram} 
\end{equation*} \hfill $\square$
\end{remark}
Remark \ref{prevremark}
 provides a proof of Theorem \ref{main-result}. 
When $X=c^o(L)$, the blow-up manifold $\overline{M}$ 
associated to $X$ is equal to $L\times[0,1)$ 
(as explained in Remark \ref{cone-structure}). In particular, notice that 
\[\tilde{H}_*(\overline M)=\tilde{H}_*(L),\] and \[\tilde H_*(\overline{M},\partial \overline{M})=\tilde{H}_*(\overline M/\partial\overline M)=\tilde{H}_*(c^o(L))=0.\]



\subsection{Universal imploded cross-section for $SU(3)$} \label{s3univ}

The universal imploded cross-section  $(T^*SU(3))_{\rm{impl}}$
of $SU(3)$ was described in Section \ref{s3univ1} above.
The middle perversity intersection homology of
the universal imploded cross-section $(T^*SU(3))_{\rm{impl}}$ is calculated in \cite{HJ} and it is given by:
\begin{align*}\label{homology-group}
I\tilde{H}^m_j((T^*SU(3))_{\rm{impl}})=\begin{cases}
\R & j=4\\
0 & \rm{otherwise}.
\end{cases}
\end{align*}
This was  done in \cite{HJ} by first computing the homology of $Y$ by a Mayer-Vietoris argument, and then applying at result to compute the interseection homology.

Comparing this with the result of Corollary \ref{SU(3)} below,
 we observe that the homology theories ${HI}^m$ and $IH^m$ do not agree on $(T^*SU(3))_{\rm{impl}}$.

\subsection{Intersection homology of a cone} \label{ihcone}

In this section we describe  
the intersection homology for a cone.

\begin{definition}
By $c^\o(X)$, the open cone over a topological space
 $X$, we mean the quotient space
\begin{equation} \label{czero}
c^\o(X)=\frac{(0,1]\times X}{(1,x) \sim (1,x^\prime)}.
\end{equation}
On the other hand, by $c(X)$ we mean the closed cone over $X$.
\end{definition}

Given any smooth manifold $L$ and perversity function $p$, the perversity $p$ intersection homology groups of $c^\o(L)$ are given by ~(\cite{Kirwan-Woolf}, p. 58):
\[I\tilde{H}^p_j(c^{\o}(L))=\begin{cases}
\tilde{H}_j(L) & j<l-p(l+1)\\
0 & \rm{otherwise}.
\end{cases}\]
By comparing this with the result of Theorem \ref{main-result} below,
 we see that the homology theories $\tilde{H}I^p$ and $I\tilde{H}^p$ usually do not agree on open cones over simply connected, smooth oriented manifolds.

First we are going to prove  two  general results.
\begin{definition} \label{intspacecone}
Let $X$ be as defined above.
In terms of the notation $L$ introduced above,
the  intersection space of $X$ is
$$
I^{\overline{p}}X=\frac{L\times [0,1) \sqcup c(L_{<k})}{(f(x),0)\sim (x,0)}  $$
where $f:L_{<k}\longrightarrow L$ is a stage $k=l-\overline{p}(l+1)$ Moore approximation of $L$.
Here the spaces  $L$ and $c_\o(L)$ were defined in Definition 
\ref{deflcl} above, and 
  the space $L_{<k}$ was defined in Definition \ref{llessdef}.
\end{definition}

\begin{theorem}\label{main-result}
Let $L$ be a simply connected  smooth manifold of dimension $l$, and let $$X=c^\o(L)$$ denote
the open cone on $L$, where
$L$ is as in Definition \ref{intspacecone}.
Assume that $\overline{p}$ is an (extended) perversity. Then
the homology intersection space associated to $X$ and $\overline{p}$ is
\begin{align*}\tilde{H}I^{\overline{p}}_j(X)=\begin{cases} 
      0 & 0 < j<l-\overline{p}(l+1) \\
      \tilde{H}_j(L) & \rm{otherwise}.
   \end{cases}\end{align*}
\end{theorem}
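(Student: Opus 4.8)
The plan is to present $I^{\overline p}X$ concretely via its blow-up data and then read off its reduced homology using the isolated-singularity formulas of Remark \ref{prevremark}, treating the borderline degree $j=k$ by hand. First I would record the relevant data. Since the only singularity of $X=c^\o(L)$ is the cone point, $\Sigma$ is a point and the link is $L$ with trivial link bundle, so by Remark \ref{cone-structure} the blow-up manifold is $\overline{M}=L\times[0,1)$ with $\partial\overline{M}=L\times\{0\}\cong L$. Two homological facts then do essentially all the work: because $\overline{M}$ deformation retracts onto $\partial\overline{M}$ we have $\tilde H_j(\overline{M})\cong\tilde H_j(L)$ for all $j$, while $\overline{M}/\partial\overline{M}\cong c^\o(L)$ is contractible, giving $\tilde H_j(\overline{M},\partial\overline{M})=0$ for all $j$. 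These are precisely the identities recorded just after Remark \ref{prevremark}.

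Next I would apply formula (\ref{isolated-sing}) with $n=\dim X=l+1$, so that its cutoff degree is $k=n-1-\overline{p}(n)=l-\overline{p}(l+1)$, matching the $k$ in the statement. For $j>k$ the formula yields $\tilde H I^{\overline p}_j(X)=\tilde H_j(\overline{M})=\tilde H_j(L)$, and for $0<j<k$ it yields $\tilde H I^{\overline p}_j(X)=\tilde H_j(\overline{M},\partial\overline{M})=0$. Together these are exactly the two branches of the claimed answer away from the borderline degree (and the degenerate degree $j=0$ falls into the ``otherwise'' branch with value $\tilde H_0(L)=0$ by connectivity of $L$).

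The only real obstacle is the single degree $j=k$, where (\ref{isolated-sing}) does not apply directly; here I would chase the exact diagram of Remark \ref{prevremark}. Substituting $\tilde H_k(\overline{M},\partial\overline{M})=0$, the term $\ker\bigl(\tilde H_k(\overline{M})\to\tilde H_k(\overline{M},\partial\overline{M})\bigr)$ becomes all of $\tilde H_k(\overline{M})=\tilde H_k(L)$, and the term $\operatorname{im}\bigl(\tilde H_k(\overline{M},\partial\overline{M})\to H_{k-1}(\partial\overline{M})\bigr)$ vanishes. The vertical exact sequence of the diagram then collapses to an isomorphism $\tilde H I^{\overline p}_k(X)\cong\tilde H_k(\overline{M})=\tilde H_k(L)$, so the value at $j=k$ joins the ``otherwise'' branch. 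This completes the case analysis and proves Theorem \ref{main-result}.

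Finally I would note that the same conclusion can be reached intrinsically from Definition \ref{intspacecone}: $I^{\overline p}X$ is the mapping cone of the composite $L_{<k}\xrightarrow{f\times\mathrm{id}}\partial\overline{M}\hookrightarrow\overline{M}$, which is homotopy equivalent to the mapping cone of the stage-$k$ Moore approximation $f\colon L_{<k}\to L$. Feeding the defining properties of a Moore approximation, namely $\tilde H_i(L_{<k})=0$ for $i\ge k$ and $f_*$ an isomorphism for $i<k$, into the cofiber long exact sequence of $L_{<k}\xrightarrow{f}L\to\operatorname{cone}(f)$ reproduces the same three cases. In this route, simple connectivity of $L$ is exactly what guarantees that the Moore approximation $f$ exists in the first place, and it also tidies up the reduced-homology bookkeeping in low degrees.
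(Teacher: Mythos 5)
Your proof is correct, but your primary route is not the one the paper writes out as the official proof of Theorem \ref{main-result}. You derive the answer from the Banagl--Maxim isolated-singularity formulas of Remark \ref{prevremark}, combined with the two identities $\tilde H_*(\overline M)\cong\tilde H_*(L)$ and $\tilde H_*(\overline M,\partial\overline M)=0$ for $\overline M=L\times[0,1)$, and you handle the borderline degree $j=k$ via the exact diagram, where the vanishing of the relative group kills the image term and forces $\tilde HI^{\overline p}_k(X)\cong\tilde H_k(\overline M)=\tilde H_k(L)$. The paper does gesture at exactly this argument --- it asserts that Remark \ref{prevremark} ``provides a proof'' of the theorem and records those two identities --- but its formal proof in subsection \ref{theorem 1.1} is instead a self-contained Mayer--Vietoris computation on an explicit open cover $A\cup B$ of $I^{\overline p}X$, with $A\simeq L$, $B$ contractible, and $A\cap B\simeq L_{<k}$, split into the cases $j>k$, $j=k$, $j<k$ using the defining properties of the stage-$k$ Moore approximation. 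Your closing observation that $I^{\overline p}X$ is homotopy equivalent to the mapping cone of $f\colon L_{<k}\to L$, so that the cofiber long exact sequence reproduces the three cases, is essentially the paper's argument in different packaging (the paper makes this identification in the remark immediately following its proof). What your main route buys is brevity, at the cost of invoking the formulas of \cite{Banagl-Maxim} as a black box; what the paper's route buys is an elementary, self-contained computation. Both are sound, and your treatment of the edge cases ($j=k$ via the diagram, $j=0$ via connectivity of $L$) is careful and complete.
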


In the  following Lemma  we  compute the homology of the middle perversity intersection space associated to the universal imploded cross-section of $SU(3)$, 
denoted by $(T^*SU(3))_{\rm{impl}}$. 
\begin{lemma} \label{lem}
As shown in \cite{HJ}, the imploded cross-section
$(T^*SU(3))_{\rm{impl}}$ is homeomorphic to the open cone $c^\o(Y)$ where \begin{align}\label{Base-of-the-cone}
    Y=\{(z,w)\in \C^3\times \C^3\mid z\cdot w=0, \vert z\vert ^2+\vert w\vert ^2=1\}
\end{align}
is a compact Riemannian manifold of $\dim_\R(Y)=9$.
(Here $z \cdot w$ denotes the usual inner product on $\CC^3.$)  Moreover, the
reduced  homology groups of $Y$ are given by:
\begin{align}\label{homology-group}
\tilde{H}_j(Y)=\begin{cases}
\R & j=4,5,9\\
0 & \rm{otherwise}.
\end{cases}
\end{align}
\end{lemma}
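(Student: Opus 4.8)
The plan is to treat the homeomorphism $(T^*SU(3))_{\rm{impl}} \cong c^\o(Y)$ as already established in \cite{HJ}, so that the real content of the Lemma is the homology computation for $Y$. The dimension claim is immediate: $\C^3 \times \C^3 = \R^{12}$, the single complex equation $z \cdot w = 0$ removes two real dimensions, and the sphere condition $|z|^2 + |w|^2 = 1$ removes one more, giving $\dim_\R Y = 9$. Since the gradient of $z\cdot w = \sum z_i w_i$ vanishes only at the origin, the quadric cone $\{z\cdot w = 0\}$ has an isolated singularity there, so $Y$ is a smooth closed manifold (and is orientable, being the link of an isolated complex hypersurface singularity).

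To compute $\tilde H_*(Y)$ I would run the Mayer--Vietoris argument indicated in Section \ref{s3univ1}, using the decomposition $Y = W \cup X$ of \cite{HJ}, Theorem 4.2, for which $W$ deformation retracts onto $S^5 \sqcup S^5$, $X$ onto $SU(3)$, and $W \cap X$ onto $SU(3) \sqcup SU(3)$. Recall that $H^*(SU(3)) = \Lambda(x_3,x_5)$, so $\tilde H_j(SU(3)) = \R$ for $j \in \{3,5,8\}$ and vanishes otherwise, while $\tilde H_j(S^5) = \R$ for $j = 5$ only. Feeding these groups (and the evident homology of the disjoint unions) into
$$ \cdots \to H_n(W \cap X) \xrightarrow{\phi_n} H_n(W) \oplus H_n(X) \to H_n(Y) \xrightarrow{\partial} H_{n-1}(W \cap X) \to \cdots $$
reduces the whole problem to understanding the inclusion-induced maps $\phi_n = ((i_W)_*, -(i_X)_*)$.

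The crux, and the step I expect to be the main obstacle, is pinning down these maps, especially $(i_W)_*$ on $H_5$. The inclusion $W \cap X \hookrightarrow X$ is, on each of the two components, a homotopy equivalence onto $X \simeq SU(3)$, so $(i_X)_*$ is the fold map $(a_1,a_2)\mapsto a_1+a_2$ in every degree. For $i_W$ I would first identify each level set $\{\,|z|^2 = t\,\}\cap Y$ with $t \in (0,1)$ with the complex Stiefel manifold $V_2(\C^3) = \{(u,s) : |u|=|s|=1,\ \langle u,s\rangle = 0\} \cong SU(3)$: rescaling to $|u|=|v|=1$ and replacing $w$ by its componentwise conjugate $v = \bar s$ turns the bilinear condition $z\cdot w = 0$ into Hermitian orthogonality. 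The retraction of a component of $W$ onto its polar $S^5$ is then the bundle projection $V_2(\C^3) \to V_1(\C^3) = S^5$ forgetting one frame vector. Via the fibration $S^3 \to SU(3) \to S^5$ (equivalently, $p^*$ carries the generator of $H^5(S^5)$ to $x_5 \in H^5(SU(3))$ since the associated Serre spectral sequence degenerates), this projection is an isomorphism on $H_5$ and zero on $H_3$ and $H_8$. Hence $(i_W)_*$ is an isomorphism $\R^2 \to \R^2$ on $H_5$ and vanishes on $H_3, H_8$.

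With these maps in hand the long exact sequence splits degree by degree. In degree $8$, $\phi_8 = (0,-\mathrm{fold})$ has rank $1$, giving $H_8(Y)=0$ and $H_9(Y) = \ker\phi_8 = \R$. In degree $5$, $\phi_5 = (\mathrm{iso}, -\mathrm{fold})$ is injective of rank $2$ into $\R^3$, giving $H_6(Y)=0$ and $H_5(Y)=\operatorname{coker}\phi_5 = \R$. In degree $3$, $\phi_3 = (0,-\mathrm{fold})$ has rank $1$, giving $H_3(Y)=0$ and $H_4(Y) = \ker\phi_3 = \R$. Finally $\phi_0$ is injective with one-dimensional cokernel, so $H_0(Y) = \R$ and all remaining groups vanish. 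This produces the claimed homology of $Y$, and I would close by checking the answer against Poincar\'e duality for the compact oriented $9$-manifold $Y$ (the pairings $H_0 \leftrightarrow H_9$ and $H_4 \leftrightarrow H_5$), which serves as an independent consistency test on the computed maps.
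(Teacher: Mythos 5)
Your proposal is correct and follows essentially the same route as the paper, which delegates the computation to \cite{HJ} but sketches exactly this argument in subsection \ref{s3univ1}: decompose $Y = W \cup X$ with $W \simeq S^5 \sqcup S^5$, $X \simeq SU(3)$, $W \cap X \simeq SU(3) \sqcup SU(3)$, and run Mayer--Vietoris. Your identification of the connecting maps (the fold map into $X$, and the projection $V_2(\C^3) \to S^5$ inducing an isomorphism on $H_5$ and zero on $H_3$ and $H_8$) is the content the paper leaves to \cite{HJ}, and it is carried out correctly.
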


Using the Lemma \ref{lem},
the corollary below
is thus a direct consequence of Theorem \ref{main-result}.\footnote{Throughout this paper, the letter $m$ denotes the lower middle perversity -- 
see for example \cite{Kirwan-Woolf}.}
\begin{corollary}\label{SU(3)}
The homology intersection space of the imploded cross-section of $SU(3)$ is 
\[\tilde{H}I^m_j((T^*SU(3))_{\rm{impl}})=\begin{cases} 
      \R  & j=5,9         \\
      0 & \rm{otherwise}.
   \end{cases}\]
\end{corollary}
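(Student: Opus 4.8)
The plan is to derive Corollary \ref{SU(3)} as an immediate application of Theorem \ref{main-result} to the open cone supplied by Lemma \ref{lem}. First I would invoke Lemma \ref{lem} to identify $(T^*SU(3))_{\rm{impl}}$ with the open cone $c^\o(Y)$, where $Y$ is the compact smooth manifold of dimension $l=9$ described there. Because $\tilde{H}I^m$ depends only on the homeomorphism type of the space, it suffices to compute $\tilde{H}I^m_j(c^\o(Y))$. To apply Theorem \ref{main-result} with $L=Y$ I must know that $Y$ is simply connected and smooth; smoothness is asserted in Lemma \ref{lem}, and simple connectivity of $Y$ follows from the structure established in \cite{HJ}.

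Next I would evaluate the truncation degree $k=l-m(l+1)$ that appears in Theorem \ref{main-result}. For the lower middle perversity one has $m(r)=\lfloor (r-2)/2\rfloor$, so $m(l+1)=m(10)=\lfloor 8/2\rfloor=4$ and hence $k=9-4=5$. Theorem \ref{main-result} then gives
\[
\tilde{H}I^m_j(c^\o(Y))=\begin{cases} 0 & 0<j<5 \\ \tilde{H}_j(Y) & \text{otherwise.} \end{cases}
\]

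Finally I would substitute the reduced homology of $Y$ from Lemma \ref{lem}, namely $\tilde{H}_j(Y)=\R$ for $j=4,5,9$ and $0$ otherwise. In the range $0<j<5$ all groups vanish, so the class in $\tilde{H}_4(Y)$ is truncated away; in the complementary range the answer agrees with $\tilde{H}_j(Y)$, so $\R$ survives exactly in degrees $j=5$ and $j=9$ (degree $0$ contributes nothing, since $\tilde{H}_0(Y)=0$ for the connected manifold $Y$). This is precisely the asserted value.

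The argument is essentially bookkeeping once Theorem \ref{main-result} and Lemma \ref{lem} are available, so there is no serious obstacle. The only point demanding attention is the behaviour at the cutoff $k=5$: this degree lies exactly where $Y$ carries homology, and one must note that $j=4$ falls in the truncated range $0<j<k$ (killing that class) while the boundary value $j=k=5$ falls in the ``otherwise'' branch of Theorem \ref{main-result} (retaining that class). Getting the perversity value $m(10)=4$ and the strict inequalities in Theorem \ref{main-result} correct is exactly what determines which classes of $Y$ persist.
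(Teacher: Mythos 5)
Your proposal is correct and follows exactly the paper's own route: the paper derives Corollary \ref{SU(3)} as a direct consequence of Theorem \ref{main-result} applied to the cone description in Lemma \ref{lem}, and your computation of the cutoff $k=l-m(l+1)=9-4=5$ (which kills the class in degree $4$ but retains those in degrees $5$ and $9$) is precisely the bookkeeping the paper relies on.
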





\begin{rk}
Observe  that homology of intersection spaces is a homotopy invariant,
whereas intersection homology is not.
\end{rk}
\subsection{Proof of Theorem \ref{main-result}}\label{theorem 1.1}

In this section we give the proof of Theorem \ref{main-result}, which
gives the intersection homology of cones of smooth oriented manifolds
$L$.
\begin{proof}
Following  Remark \ref{cone-structure}, we have
\[I^{\overline{p}}X=L\times [0,1)\sqcup c(L_{<k})/\sim\]
where $k=l-\overline{p}(l+1)$ and the equivalence relation is given by:
\[ (x,0)\sim \bigl (f(x),0\bigr)\mbox{,\quad} \forall x\in L_{<k}.\]
Here $f:L_{<k}\longrightarrow L$ is a stage $k$ Moore approximation of $L$
(see for example \cite{Banagl-Chriestenson}). Define two open sets $A$ and $B$ as follows:
\begin{align}\label{set A}
A=\frac{L\times[0,1)\sqcup L_{<k}\times [0,\frac{1}{2}+\epsilon)}{(f(x),0)\sim (x,0)},
\end{align}
\begin{align}\label{set B}
B=C(L_{<k})\setminus \Bigl (L_{<k}\times [0,\frac{1}{2}-\epsilon) \Bigr ).
\end{align}
We observe that $B$ is contractible, as it is the preimage of $[1, 1/2 + \epsilon)$ in $c(L_{<k})$ 
under the cone map sending $(x,y) \in [0,1] \times L_{<k}$ to $[0,1]$.
Hence it is homeomorphic to the cone on $L_{<k}$, so it is contractible. The set $A$ is homotopy equivalent to $L$, because the identification 
map $f$ identifies each point in $L_{<k}$ to a point in $L$. Moreover, we observe that $A\cap B$ deformation retracts to $L_{<k}$, as $A\cap B$ is
homeomorphic to $L_{<k}\times(1/2-\epsilon,1/2 +\epsilon)$.\\
Writing the Mayer-Vietoris sequence, we have the following:

\textit{Case I}: $j-1 \geq k$

In this case, the Mayer-Vietoris sequence gives
$ 0 \to \tilde{H}_j(L) \to  \tilde{H}_j(I^{ \bar{p}} (X))  \to 0$
(because  $\tilde{H}_j(L_{<k}) = \tilde{H}_{j-1} (L_{<k}) = 0 $).
Hence $I^{\bar{p}} (X) \cong  \tilde{H}_j(L_{<k}) = 0 $. 

\textit{Case II}: $j=k$

The Mayer-Vietoris sequence gives

 \begin{equation*}
\begin{diagram}  \dots &\rTo& 0 = \tilde{H}_k(L_{<k}) &\rTo^{f_*}& \tilde{H}_k(L) &\rTo^{\alpha_k}&  \tilde{H}_k (I^{\bar{p}}(X)) &\rTo^{\beta_k}& \cr  
\dots &\rTo& \tilde{H}_{k-1} (L_{<k})  &\rTo^{f_*}& \tilde{H}_{k-1} (L) &\rTo^{\alpha_{k-1}}&
  \tilde{H}_{k-1} (I^{\bar{p}}(X) ) &\rTo^{\beta_{k-1}}& \cr
\dots &\rTo& \tilde{H}_{k-2} (L_{<k}) &\rTo^{f_*}&  \tilde{H}_{k-2}(L) &\rTo^{\alpha_{k-2}}&
\tilde{H}_{k-2} (I^{\bar{p}}(X) ) &\rTo^{\beta_{k-2}}&\dots \end{diagram} \end{equation*}
The maps $f_*: \tilde{H}_j(L_{ <k } ) \to \tilde{H}_j(L) $ are isomorphisms 
for $j \le k -1$. This implies   $\alpha_j$
$= \beta_j = 0 $ for $j \le k-1$. 
Also $\beta_k = 0 $,   so $\tilde{H}_k(I^{\bar{p}}(X) ) \cong \tilde{H}_k (L)$.

\textit{Case III}: $j<k$

Since $f_*:\tilde{H}_j(L_{<k})\rightarrow \tilde{H}_j(L\ )$ is an isomorphism for $j < k$, $\alpha_j = $ 
$\beta_j = 0 $ in this range. This implies the Mayer-Vietoris sequence gives
$$0 \to \tilde{H}_j(I^{\bar{p}}X)  \to 0 $$
which implies $\tilde{H}_j (I^{\bar{p}} (X) ) = 0 $ for $j \le k-1$. \
This completes the proof of the theorem. 
\end{proof}
\begin{remark}
When $X=c^\o(L)$, $I^{\overline{p}}X$ deformation retracts to ${\rm Cone}(f)$ where by ${\rm Cone}(f)$ we mean the mapping cone of 
$f:L_{<k}\longrightarrow L$. Therefore Theorem \ref{main-result} gives the homology groups of ${ \rm Cone}(f)$.
\end{remark}


\appendix
\section{Intersection space associated to a suspension} \label{s:suspension}
The authors are not aware of 
examples of imploded cross-sections which 
are suspensions. 
The  material below  is included nonetheless because 
the  homology of intersection spaces of 
suspensions can be treated using the same techniques
as are used to characterize  intersection homology 
and cohomology of intersection spaces of the universal example for $SU(3)$.

In this Appendix we will prove a theorem related to the suspension over a smooth manifold.
\begin{theorem}\label{HI-suspension}
Let $L$ be a smooth, simply connected, oriented manifold of dimension $l$ and let $\overline{p}$ denote an extended perversity. Then:
\[\tilde{H}I^{\overline{p}}_j\bigl ( susp(L)	\bigr)=\begin{cases}
\tilde{H}_{j-1}(L), & 0<i<l-\overline{p}(l+1)\\
\tilde{H}_j(L)\oplus \tilde{H}_{j-1}(L), & j=l-\overline{p}(l+1)\\
\tilde{H}_j(L), &  \rm{otherwise},
\end{cases}\]
where by $\rm{susp}(L)$ we mean the suspension over $L$.
\end{theorem}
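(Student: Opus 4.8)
The plan is to realize $\operatorname{susp}(L)$ as a depth one pseudomanifold whose singular set consists of the two suspension vertices, each an isolated point with link $L$, and then to mimic the Mayer--Vietoris argument from the proof of Theorem \ref{main-result}. Away from the two vertices $\operatorname{susp}(L)$ is the smooth manifold $L\times(-1,1)$, so blowing up both cone points replaces the space by the cylinder $L\times[-1,1]$ whose two boundary copies of $L$ are the links. Applying the intersection space construction of Definition \ref{intspacecone} locally at each vertex (attaching a cone on a stage $k$ Moore approximation $L_{<k}$ at each end, with $k=l-\overline{p}(l+1)$) gives
$$I^{\overline{p}}(\operatorname{susp}(L)) = c(L_{<k}) \cup_{g_{-1}} (L \times [-1,1]) \cup_{g_{1}} c(L_{<k}),$$
where $g_{\pm 1}$ attach the two cones at $L\times\{\pm 1\}$. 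Note that Remark \ref{prevremark} does not apply directly, since it concerns a single isolated singularity, which is exactly why a Mayer--Vietoris computation is needed.

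Next I would cover this space by two open sets $U$ and $V$, where $U$ is the $-1$ cone together with $L\times[-1,1/2)$ and $V$ is $L\times(-1/2,1]$ together with the $+1$ cone. After reparametrizing the cylinder factor, each of $U$ and $V$ is homeomorphic to the single cone intersection space $I^{\overline{p}}(c^\o(L))$, so Theorem \ref{main-result} gives $\tilde{H}_j(U)=\tilde{H}_j(V)=0$ for $0<j<k$ and $\tilde{H}_j(L)$ otherwise. Their intersection $U\cap V\cong L\times(-1/2,1/2)$ deformation retracts onto the equatorial slice $L\times\{0\}\simeq L$. Working over the field $\R$, where every short exact sequence splits, the Mayer--Vietoris sequence then yields $\tilde{H}_j(I^{\overline{p}}(\operatorname{susp}(L)))\cong \operatorname{coker}(\psi_j)\oplus\ker(\psi_{j-1})$, where $\psi_j\colon \tilde{H}_j(L)\to \tilde{H}_j(U)\oplus\tilde{H}_j(V)$ is induced by the two inclusions of the equatorial $L$.

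The crux, and the step I expect to be the main obstacle, is the identification of $\psi_j$. I would argue that the inclusion of any cylinder slice $L\times\{t\}$ into the single cone intersection space induces, for $j\geq k$, precisely the isomorphism $\tilde{H}_j(L)\cong \tilde{H}_j(I^{\overline{p}}(c^\o(L)))$ realized by the map $\alpha_j$ in the proof of Theorem \ref{main-result}: there the set $A$ deformation retracts onto the slice glued by the Moore approximation, and all slices of the blow-up cylinder are mutually homotopic inside $U$. Consequently, for $j\geq k$ the map $\psi_j$ is the signed diagonal $x\mapsto(x,-x)$, which is injective with $\operatorname{coker}(\psi_j)\cong \tilde{H}_j(L)$, while for $0<j<k$ the target vanishes, so $\psi_j=0$, giving $\operatorname{coker}(\psi_j)=0$ and $\ker(\psi_j)=\tilde{H}_j(L)$.

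Finally I would read off the three ranges. For $0<j<k$ the cokernel term vanishes and $\tilde{H}_j\cong \ker(\psi_{j-1})=\tilde{H}_{j-1}(L)$. For $j=k$ the cokernel contributes $\tilde{H}_k(L)$ and, since $k-1<k$, the kernel contributes $\tilde{H}_{k-1}(L)$, giving $\tilde{H}_k(L)\oplus\tilde{H}_{k-1}(L)$. For $j>k$ (and, using that $L$ is connected, also for $j\leq 0$) the cokernel is $\tilde{H}_j(L)$ while $\psi_{j-1}$ is injective, so $\ker(\psi_{j-1})=0$, leaving $\tilde{H}_j(L)$. Assembling these three cases reproduces the stated formula.
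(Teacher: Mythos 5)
Your argument is correct and lands on the stated formula, but it is organized differently from the paper's proof, and the comparison is instructive. The paper uses \emph{two} different Mayer--Vietoris covers: a cover $\{A,B\}$ with $A\simeq L$, $B$ contractible and $A\cap B\simeq L_{<k}\sqcup L_{<k}$ to handle degrees $j\geq k$ (Lemma \ref{higher-dim-homology} and the final computation of $\tilde{H}_k$ via the maps $\gamma_i$), and a second cover $\{C,D\}$ with $C\simeq D\simeq {\rm Cone}(f)$ and $C\cap D\simeq L$ for degrees $j<k$. You instead use a single symmetric cover $\{U,V\}$ in all degrees, with each half homeomorphic to $I^{\overline{p}}(c^\o(L))$, so that Theorem \ref{main-result} can be quoted as a black box for $\tilde{H}_*(U)$ and $\tilde{H}_*(V)$; your cover is essentially the paper's $\{C,D\}$, and the price of using it uniformly is exactly the ``crux'' you identify, namely showing that the slice inclusion $L\times\{0\}\hookrightarrow U$ induces an isomorphism on $\tilde{H}_j$ for $j\geq k$. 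That step is sound: the slice inclusion factors through the subset $A$ of the proof of Theorem \ref{main-result}, the slice inclusion into $A$ is a homotopy equivalence, and $\alpha_j\colon\tilde{H}_j(A)\to\tilde{H}_j(I^{\overline{p}}(c^\o(L)))$ is an isomorphism for $j\geq k$ by Cases I and II of that proof; with $\R$ coefficients the cokernel/kernel bookkeeping then gives all three ranges at once, which is arguably cleaner than juggling two covers. One small point worth flagging: you attach two disjoint cones $c(L_{<k})$, one at each end of the cylinder, whereas the displayed definition in the paper attaches the single cone $c(L_{<k}\times\{u,v\})$ on the disjoint union; these models differ by identifying the two cone apexes (hence, up to homotopy, by a wedge with $S^1$, contributing an extra $\R$ in degree $1$), and it is your two-cone model whose reduced homology agrees with the stated formula, so you should state explicitly that this is the model of $I^{\overline{p}}({\rm susp}(L))$ you are computing with.
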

\begin{remark}
Note that the suspension of a smooth manifold $M$ is not normally a smooth
manifold itself (the suspension of $M$ is only a smooth manifold only if
$M$ is a sphere), so we would not expect  the ordinary cohomology of the suspension of $M$ to 
satisfy Poincar\'e duality.
\end{remark}

Let $L$ be a smooth, simply connected  manifold of dimension $L$. By $X:={\rm susp}(L)$ we mean the quotient space obtained from $L\times[-1,1]$ 
by collapsing $L\times \{1\}$ to one point (denote this point by $v$), and $L\times \{-1\}$ to another point (denoted by $u$). We observe that $(X,\{u,v\})$ 
is a depth 1 Thom-Mather pseudomanifold  with trivial link bundle
\[L\times \{u,v\}\xrightarrow{pr_2} \{u,v\}.\]
Following the construction given in subsection \ref{theorem 1.1}, we have $\overline{M}=CT(X)\cong [-1,1]\times L$. 

Fix a perversity $\overline{p}$ and set $k:=l-\overline{p}(l+1)$, then
\begin{align}
I^{\overline{p}}X=L\times[-1,1]\cup_g c(L_{<k}\times \{u,v\}),
\end{align}
where the map $g$ is defined to be the composition
\begin{align}
L_{<k}\times\{u,v\}\rightarrow L\times \{u,v\}\xrightarrow{\cong} L\times\{-1\}\sqcup L\times \{1\}\hookrightarrow L\times[-1,1]
\end{align}
where $f:L_{<k}\rightarrow L$ is a stage $k$ Moore approximation of $L$. 

\subsection{Proof of Theorem \ref{HI-suspension}}

Throughout this section, $X={\rm susp}(L)$ where $L$ is a smooth manifold satisfying the conditions given in Theorem \ref{HI-suspension}. Moreover,
 we set $k= l-\overline{p}(l+1)$.

First we are going to prove the following lemma:
\begin{lemma}\label{higher-dim-homology}
For $i>k$ we have
\[\tilde{H}I^{\overline{p}}_i(X)=\tilde{H}_i(L).\]
\end{lemma}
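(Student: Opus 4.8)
The plan is to compute $\tilde{H}I^{\overline{p}}_i(X)$ directly from the explicit model
\[
I^{\overline{p}}X = (L\times[-1,1])\cup_g c\bigl(L_{<k}\times\{u,v\}\bigr)
\]
using a Mayer--Vietoris argument entirely parallel to the proof of Theorem \ref{main-result}. The key structural observation is that the attaching is taking place at the two \emph{ends} of the cylinder $L\times[-1,1]$, each end being the target of a copy of the Moore approximation $f\colon L_{<k}\to L$.

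First I would set up an open cover. Let $A$ be an open neighbourhood of the cylinder $L\times[-1,1]$ together with short collars of the two cones, i.e.\ the analogue of the set $A$ in \eqref{set A}, obtained by adjoining $L_{<k}\times\{u,v\}\times[0,\tfrac12+\epsilon)$ glued via $g$; and let $B$ be the two truncated open cones $c(L_{<k}\times\{u,v\})\setminus\bigl(L_{<k}\times\{u,v\}\times[0,\tfrac12-\epsilon)\bigr)$, the analogue of \eqref{set B}. As in the cone proof, $B$ is a disjoint union of two contractible truncated cones, so $\tilde H_i(B)=0$ for all $i$; the set $A$ deformation retracts onto the cylinder $L\times[-1,1]$, which deformation retracts to $L$, so $\tilde H_i(A)\cong\tilde H_i(L)$; and the intersection $A\cap B$ deformation retracts onto $L_{<k}\times\{u,v\}$, hence $\tilde H_i(A\cap B)\cong\tilde H_i(L_{<k})\oplus\tilde H_i(L_{<k})$, \emph{two} copies because there are two cone points. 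This doubling is the feature that distinguishes the suspension computation from the single-cone computation in Theorem \ref{main-result}.

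Next I would feed these into the reduced Mayer--Vietoris sequence. Since $\tilde H_i(B)=0$, the sequence degenerates into short exact pieces relating $\tilde H_i(A\cap B)$, $\tilde H_i(A)$ and $\tilde H_i(I^{\overline{p}}X)$, with connecting map essentially $f_*\oplus f_*\colon \tilde H_i(L_{<k})^{\oplus 2}\to \tilde H_i(L)$. For the present lemma I only need the range $i>k$. In that range the defining property of a stage $k$ Moore approximation forces $\tilde H_i(L_{<k})=0$ (the homology of $L_{<k}$ vanishes in degrees $\ge k$), so both $\tilde H_i(A\cap B)$ and $\tilde H_{i-1}(A\cap B)$ vanish when $i>k$ (using $i-1\ge k$). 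The Mayer--Vietoris sequence then yields the isomorphism $\tilde H_i(A)\xrightarrow{\cong}\tilde H_i(I^{\overline{p}}X)$, i.e.\ $\tilde{H}I^{\overline{p}}_i(X)\cong\tilde H_i(L)$, which is exactly the claim.

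The computation is not hard; the main thing to be careful about is the bookkeeping of the two cone points (the doubling of $L_{<k}$ in $A\cap B$) and making sure the Moore-approximation vanishing $\tilde H_i(L_{<k})=0$ is applied in the correct degree range, namely checking that $i>k$ already suffices to kill both the degree-$i$ and degree-$(i-1)$ terms of $A\cap B$. The doubling plays no role for $i>k$ but is precisely what will produce the direct sum $\tilde H_j(L)\oplus\tilde H_{j-1}(L)$ in the boundary case $j=k$ of the full Theorem \ref{HI-suspension}, so I would phrase the Mayer--Vietoris setup once, in a way that also serves the later degree ranges.
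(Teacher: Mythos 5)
Your proof is correct and follows essentially the same route as the paper: the same cover by $A$ (cylinder plus collars) and $B$ (truncated cones), the same identifications $A\simeq L$, $B$ acyclic, $A\cap B\simeq L_{<k}\sqcup L_{<k}$, and the same Mayer--Vietoris collapse using $\tilde H_i(L_{<k})=0$ for $i\ge k$. Your remark that the doubling of $L_{<k}$ only matters in the boundary degree $j=k$ matches how the paper completes Theorem \ref{HI-suspension}.
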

\begin{proof}
The proof of this lemma is very similar to the proof of Theorem \ref{main-result}. Define two open sets $A$ and $B$ as follows.
The space $A$ is defined by 
\[A=L\times[-1,1]\sqcup L_{<k}\times\{u,v\}\times[0,1/2+\epsilon)/\sim\]
where the equivalence relation is given by:
\begin{align*}
&(l,u,0)\sim (f(l),-1)\\
&(l,v,0)\sim (f(l),1). 
\end{align*}
On the other hand, the space $B$ is defined by 
\[B=c(L_{<k}\times\{u,v\})\setminus L_{<k}\times \{u,v\}\times [0,1/2-\epsilon) .\]
By similar reasoning as in the proof of Theorem \ref{main-result}, we see that $A$ deformation retracts to $L$, $B$ is contractible and $A\cap B$ 
is homotopy equivalent to 
$L_{<k}\sqcup L_{<k}.$ 

For $j>k$ the Mayer-Vietoris sequence gives 
\[0 \to \tilde{H}_j(L) \to  \tilde{H}_j(I^{ \bar{p}} (X))  \to 0\]
as $\tilde{H}(L_{<k})\oplus \tilde{H}(L_{<k}) =0$ for $j\geq k$. 
\end{proof}
 
We now require the following lemma:
\begin{lemma}
For $i<k$
\[\tilde{H}_i(I^{\overline{p}}X)=\tilde{H}_{i-1}(L).\]
\end{lemma}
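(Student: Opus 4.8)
The plan is to run the Mayer--Vietoris argument from the proof of Lemma \ref{higher-dim-homology}, now in the range $i<k$, where the degree shift in the answer will be produced entirely by the connecting homomorphism. I will reuse the same open cover: the set $A$, which deformation retracts onto $L$; the set $B$, the union of the two truncated cones near their vertices, which is a disjoint union of two contractible pieces, one over each of the cone points $u,v$; and $A\cap B$, which is homotopy equivalent to $L_{<k}\sqcup L_{<k}$, one copy in the collar of each cone. Writing $Z=I^{\overline{p}}X$, the reduced Mayer--Vietoris sequence of $(A,B)$ reads, in each degree,
\[
\tilde{H}_i(A\cap B)\xrightarrow{\phi_i}\tilde{H}_i(A)\oplus\tilde{H}_i(B)\xrightarrow{\psi_i}\tilde{H}_i(Z)\xrightarrow{\partial_i}\tilde{H}_{i-1}(A\cap B)\xrightarrow{\phi_{i-1}}\tilde{H}_{i-1}(A)\oplus\tilde{H}_{i-1}(B).
\]
For degrees $\ast\geq 1$ we have $\tilde{H}_\ast(A\cap B)=\tilde{H}_\ast(L_{<k})\oplus\tilde{H}_\ast(L_{<k})$, $\tilde{H}_\ast(A)=\tilde{H}_\ast(L)$ and $\tilde{H}_\ast(B)=0$, and under the retraction $A\simeq L$ the two inclusions of the collars into $A$ are each induced by the Moore approximation $f$, so that $\phi_\ast(a,b)=f_\ast(a)+f_\ast(b)\in\tilde{H}_\ast(L)$.

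The key input is that $f_\ast\colon\tilde{H}_\ast(L_{<k})\to\tilde{H}_\ast(L)$ is an isomorphism for $\ast<k$. Fix $i$ with $2\leq i<k$, so that $i$ and $i-1$ both lie in this range. Then $\phi_i$ is surjective, and exactness at $\tilde{H}_i(A)\oplus\tilde{H}_i(B)$ forces $\psi_i=0$; hence $\partial_i$ is injective. On the other hand $\phi_{i-1}$ is surjective with kernel the antidiagonal $\{(a,-a)\}\cong\tilde{H}_{i-1}(L_{<k})\cong\tilde{H}_{i-1}(L)$, and by exactness $\operatorname{im}\partial_i=\ker\phi_{i-1}$. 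Combining these, $\tilde{H}_i(Z)\cong\ker\phi_{i-1}\cong\tilde{H}_{i-1}(L)$, which is the claim for $2\leq i<k$.

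It remains to treat $i=1$, and this low-degree bookkeeping in reduced homology is the one place that requires care --- it is exactly where the two-cone structure of $B$ (rather than a single cone through both vertices) is essential, since a single vertex would create a spurious class in $\tilde{H}_1$. Here $i-1=0$, so I replace the antidiagonal computation by a direct component count: $\tilde{H}_0(A\cap B)=\R$ because $L_{<k}\sqcup L_{<k}$ has two components, while $\tilde{H}_0(A)\oplus\tilde{H}_0(B)=0\oplus\R$ since $L$ is connected but $B$ has two contractible components. The map $\phi_0$ sends the class of each collar component isomorphically onto that of the corresponding cone, hence is injective, so $\ker\phi_0=0$. Since $\tilde{H}_1(A)\oplus\tilde{H}_1(B)=0$ (as $L$ is simply connected and, for $k\geq 2$, $\tilde{H}_1(L_{<k})=0$), exactness gives $\tilde{H}_1(Z)\cong\ker\phi_0=0=\tilde{H}_0(L)$. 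Together with the previous paragraph this establishes $\tilde{H}_i(I^{\overline{p}}X)\cong\tilde{H}_{i-1}(L)$ for all $i<k$, completing the proof.
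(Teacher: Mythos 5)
Your proof is correct, but it takes a genuinely different route from the paper's. The paper does not reuse the cover $\{A,B\}$ of Lemma \ref{higher-dim-homology} here; instead it covers $I^{\overline{p}}X$ by two open sets $C$ and $D$, each obtained by deleting a separating collar in one of the two cones, so that $C\simeq D\simeq \mathrm{Cone}(f)$ and $C\cap D\simeq L$. Since $\tilde{H}_j(\mathrm{Cone}(f))=0$ for $j<k$ (because $f_*$ is an isomorphism in degrees below $k$), its Mayer--Vietoris sequence collapses to $0\to \tilde{H}_j(I^{\overline{p}}X)\to \tilde{H}_{j-1}(L)\to 0$ and the degree shift appears immediately. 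You instead keep the cover $\{A,B\}$ and extract the shift from the connecting homomorphism, identifying $\tilde{H}_i(I^{\overline{p}}X)$ with $\ker\phi_{i-1}$, the antidiagonal copy of $\tilde{H}_{i-1}(L)$ --- the same kernel computation the paper performs (for $\gamma_{k-1}$) only in the borderline case $j=k$. Each approach buys something: the paper's cover makes the range $i<k$ a one-line consequence of the acyclicity of $\mathrm{Cone}(f)$ below degree $k$, while yours runs the entire theorem off a single cover and forces the $\pi_0$ bookkeeping at $i=1$ to be made explicit. That last point is a real merit of your version: your observation that $B$ must be read as two disjoint cones, one over each of $L_{<k}\times\{u\}$ and $L_{<k}\times\{v\}$, and that a single shared vertex would create a spurious class in $\tilde{H}_1$, is exactly right --- the asserted value $\tilde{H}_1(I^{\overline{p}}X)=\tilde{H}_0(L)=0$ holds only under that reading, whereas the paper glosses over this (it calls $B$ ``contractible'' in Lemma \ref{higher-dim-homology} and asserts $C\cap D\simeq L$ while ignoring extra contractible components near the vertices; these slips do not change the final answer, but your argument makes the low-degree bookkeeping honest).
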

\begin{proof}
This time we cover $I^{\overline{p}}X$ with two different open sets $C,D$ as follows:
\begin{align*}
&C=L\sqcup_g c(L_{<k}\times \{u,v\})\setminus L_{<k}\times\{u\}\times[1/2-\epsilon,1/2+\epsilon,]\\
&D=L\sqcup_g c(L_{<k}\times \{u,v\})\setminus L_{<k}\times\{v\}\times[1/2-\epsilon,1/2+\epsilon.]
\end{align*}
Now we have that $C$ and $D$ deformation retract to ${\rm Cone}(f)$. Moreover, $C\cap D$ is 
homotopy equivalent to $L$. For $i<k$ the Mayer-Vietoris sequence with respect to the cover $\{C,D\}$ gives
\[ 0 \to \tilde{H}_j(I^{\overline{p}}X) \to  \tilde{H}_{j-1}(L)  \to 0\]
as $\tilde{H}_j({\rm Cone} (f))\oplus \tilde{H}_j({\rm Cone}(f))=0$ for $j<k$.
\end{proof}

\noindent{\em Conclusion of proof of Theorem \ref{HI-suspension}:}
In order to complete the proof of Theorem \ref{HI-suspension}, we need to calculate $H_k(I^{\overline{p}}X)$.
Once again we consider the Mayer-Vietoris sequence with respect to the cover $I^{\overline{p}}X=A\cup B$ given in the proof of Lemma
\ref{higher-dim-homology}.

 \begin{equation}\label{M-V}
\begin{diagram}  \dots &\rTo& 0 = \tilde{H}_k(L_{<k})\oplus\tilde{H}_k(L_{<k}) &\rTo^{\gamma_k}& \tilde{H}_k(L) &\rTo^{\alpha_k}&  \tilde{H}_k (I^{\bar{p}}(X)) &\rTo^{\beta_k}& \cr  
\dots &\rTo& \tilde{H}_{k-1} (L_{<k})\oplus\tilde{H}_{k-1}(L_{<k})  &\rTo^{\gamma_{k-1}}& \tilde{H}_{k-1} (L) &\rTo^{\alpha_{k-1}}&
  \tilde{H}_{k-1} (I^{\bar{p}}(X) ) &\rTo^{\beta_{k-1}}&\dots \end{diagram} \end{equation}

Using this diagram, we observe that $\a_k$ is injective, as the top line of the diagram gives
\[0\xrightarrow{\gamma_k} \tilde{H}_k(L)\xrightarrow{\a_k}\tilde{H}_k(I^{\overline{p}}X)\rightarrow\dots \]

The map $\gamma_i$ is given by
\begin{align*}
\gamma_i:&\tilde{H}_i(L_{<k})\oplus\tilde{H}_i(L_{<k})\rightarrow \tilde{H}_i(L),\quad (\omega,\eta)\mapsto f_*(\omega)+f_*(\eta).
\end{align*}
For $i<k$, $f_*:\tilde{H}_i(L_{<k})\longrightarrow \tilde{H}_i(L)$ is an isomorphism. This implies that $\gamma_i$ is a surjective map with 
\[\ker(\gamma_i)=\{(\omega,-\omega)\vert\quad \omega \in \tilde{H}_i(L_{<k})\}\cong \tilde{H}_i(L_{<k})=\tilde{H}_i(L).\]
 In particular we get $\ker(\gamma_{k-1})\cong\tilde{H}_{k-1}(L).$ 
Now we can calculate $\tilde{H}_k(I^{\overline{p}}X)$.
\[\tilde{H}_k(I^{\overline{p}}X)={\rm Im}(\beta_k)\oplus \ker(\beta_k)=\ker(\gamma_{k-1})\oplus 
{\rm Im}(\a_k)=\tilde{H}_{k-1}(L)\oplus \tilde{H}_{k}(L).\]
\hfill $\square$

This completes the proof of Theorem \ref{HI-suspension}.

\end{document}